\pgfplotsset{compat=1.6}
\newcommand{\rus}[1]{\selectlanguage{russian}{\fontfamily{erewhon-TLF}\fontsize{9pt}{11pt}\selectfont #1}\selectlanguage{USenglish}}
\newcommand{\bibrus}[1]{\selectlanguage{russian}{\fontfamily{erewhon-TLF}\fontsize{8pt}{10pt}\selectfont #1}\selectlanguage{USenglish}}
\newcommand{\ii}{\mathrm{i}}
\newcommand{\ee}{\mathrm{e}}
\newcommand{\dd}{\mathrm{d}}
\newcommand{\NN}{\mathbb{N}}                                     
\newcommand{\RR}{\mathbb{R}}                                     
\newcommand{\ZZ}{\mathbb{Z}}                                     
\newcommand{\fall}{\:\forall\:}                                  
\newcommand{\abs}[1]{\left\lvert#1\right\rvert}                  
\newcommand{\abss}[1]{\lvert#1\rvert}                            
\newcommand{\mnorm}[1]{\left\lVert#1\right\rVert}                
\newcommand{\setn}[1]{\left\{#1\right\}}                         
\newcommand{\setcond}[2]{\left\{#1 \:\middle\vert\: #2\right\}}  
\newcommand{\setconds}[2]{\{#1 \:\vert\: #2\}}                   
\newcommand{\defeq}{\mathrel{\mathop:}=}                         
\newcommand{\lr}[1]{\!\left(#1\right)}                           
\newcommand{\p}{\partial}                                        
\newcommand{\skpr}[2]{\left\langle#1 \,\middle\vert\, #2\right\rangle}               
\newcommand{\norel}{\mathrel{\phantom{=}}}                                           
\newcommand{\eqdef}{=\mathrel{\mathop:}}                                             
\newcommand{\enquote}[1]{``#1''}                                                   
\newcommand{\floor}[1]{\left\lfloor #1\right\rfloor}
\let \eps \varepsilon
\let \piup \uppi
\theoremstyle{plain} 
\newtheorem{Satz}{Theorem}[section]\newtheorem{Kor}[Satz]{Corollary}
\newtheorem{Lem}[Satz]{Lemma}
\newtheorem{Prop}[Satz]{Proposition}
\theoremstyle{definition} 
\newtheorem{Bem}[Satz]{Remark}
\crefname{Satz}{Theorem}{Theorems}
\crefname{Prop}{Proposition}{Propositions}
\crefname{Lem}{Lemma}{Lemmas}
\crefname{Kor}{Corollary}{Corollaries}
\crefname{Bem}{Remark}{Remarks}
\crefname{Bsp}{Example}{Examples}
\crefname{Def}{Definition}{Definitions}
\crefname{Alg}{Algorithm}{Algorithms}
\numberwithin{equation}{section}
\renewcommand*{\eqref}[1]{%
  \hyperref[{#1}]{\textup{\tagform@{\ref*{#1}}}}%
}
  \DeclareTextCommandDefault\textcommabelow[1]
\hmode@bgroup\ooalign{\null#1\crcr\hidewidth\raise-.31ex
     \hbox{\check@mathfonts\fontsize\ssf@size\z@
     \math@fontsfalse\selectfont,}\hidewidth}\egroup}%
\begin{document}
\allowdisplaybreaks
\parindent 0pt

\title{On the optimal constants in the two-sided Stechkin inequalities}


\author{Thomas Jahn}
\address[T. Jahn]{Faculty of Mathematics, Technische Universität Chemnitz, 09107 Chemnitz}
\email{thomas.jahn@mathematik.tu-chemnitz.de}
\thanks{}

\author{Tino Ullrich}
\address[T. Ullrich]{Faculty of Mathematics, Technische Universität Chemnitz, 09107 Chemnitz}
\email{tino.ullrich@mathematik.tu-chemnitz.de}
\thanks{}

\subjclass[2010]{41A25, 46E30}

\keywords{approximation space, Marcinkiewicz interpolation theorem, sparse approximation, Stechkin inequality}

\date{}

\begin{abstract}
We address the optimal constants in the strong and the weak Stechkin inequalities, both in their discrete and continuous variants. 
These inequalities appear in the characterization of approximation spaces which arise from sparse approximation or have applications to interpolation theory.
An elementary proof of a constant in the strong discrete Stechkin inequality given by Bennett is provided, and we improve the constants given by Levin and Stechkin and by Copson.
Finally, the minimal constants in the weak discrete Stechkin inequalities and both continuous Stechkin inequalities are presented.
\end{abstract}

\maketitle

\section{Introduction}\label{chap:intro}
In the present paper, we address the minimal constants $c_1(q)$, $C_1(q)$, $c_{1,\infty}(q)$, $C_{1,\infty}(q)$, $\bar{c}_1(q)$, $\bar{C}_1(q)$, $\bar{c}_{1,\infty}(q)$, and $\bar{C}_{1,\infty}(q)>0$ in the inequalities
\begin{equation}
\frac{1}{c_1(q)}\sum_{n=1}^\infty\lr{\frac{1}{n}\sum_{k=n}^\infty a_k^q}^{\frac{1}{q}}\leq\sum_{n=1}^\infty a_n\leq C_1(q)\sum_{n=1}^\infty\lr{\frac{1}{n}\sum_{k=n}^\infty a_k^q}^{\frac{1}{q}},\label{eq:general-stechkin-q-intro}
\end{equation}
\begin{equation}
\frac{1}{c_{1,\infty}(q)}\sup_{n\in\NN}n\lr{\frac{1}{n}\sum_{k=n}^\infty a_k^q}^{\frac{1}{q}}\leq\sup_{n\in\NN} na_n\leq C_{1,\infty}(q)\sup_{n\in\NN}n\lr{\frac{1}{n}\sum_{k=n}^\infty a_k^q}^{\frac{1}{q}},\label{eq:general-weak-l1-q-intro}
\end{equation}
\begin{equation}
\frac{1}{\bar{c}_1(q)}\int_0^\infty\lr{\frac{1}{t}\int_t^\infty f(s)^q \dd s}^{\frac{1}{q}}\dd t\leq \int_0^\infty f(t)\dd t\leq \bar{C}_1(q)\int_0^\infty \lr{\frac{1}{t}\int_t^\infty f(s)^q \dd s}^{\frac{1}{q}}\dd t,\label{eq:general-stechkin-continuous-intro}
\end{equation}
and
\begin{equation}
\frac{1}{\bar{c}_{1,\infty}(q)}\sup_{t>0}t\lr{\frac{1}{t}\int_t^\infty f(s)^q \dd s}^{\frac{1}{q}}\leq\sup_{t>0} tf(t)\leq \bar{C}_{1,\infty}(q)\sup_{t>0}t\lr{\frac{1}{t}\int_t^\infty f(s)^q \dd s}^{\frac{1}{q}},\label{eq:general-weak-l1-continuous-intro}
\end{equation}
for sequences $(a_n)_{n\in\NN}$ with $a_1\geq a_2\geq\ldots\geq 0$, monotonically decreasing functions $f:(0,\infty)\to [0,\infty)$, and $1<q\leq\infty$.
These inequalities are henceforth referred to as the \emph{strong discrete}, the \emph{weak discrete}, the \emph{strong continuous}, and the \emph{weak continuous Stechkin inequality}, respectively.

The right-hand side inequalities of \eqref{eq:general-stechkin-q-intro}, \eqref{eq:general-weak-l1-q-intro}, and \eqref{eq:general-weak-l1-continuous-intro} also allow for $q=1$.
Note that in case $q=\infty$, the expressions $\lr{\frac{1}{n}\sum_{k=n}^\infty a_k^q}^{\frac{1}{q}}$ and $\lr{\frac{1}{t}\int_x^\infty f(s)^q \dd s}^{\frac{1}{q}}$ are replaced by $\sup\setcond{a_k}{k\geq n}=a_n$ and $\sup\setcond{f(s)}{s\geq t}=f(t)$.

Copson \cite[Theorem~2.3]{Copson1928} proves that $C_1(q)\leq q^{\frac{1}{q}}$, cf. also Hardy, Littlewood, and Pólya \cite[Theorem~345]{HardyLiPo1934}.
Levin and Stechkin \cite[{\rus{Д}}.61]{HardyLiPo1948} improve Copson's result \cite[Theorem~2.3]{Copson1928} when $1<q<\infty$, showing that $C_1(q)=(q-1)^\frac{1}{q}$ when $3\leq q<\infty$ and giving upper bounds in the remaining cases.
Stechkin revisits \eqref{eq:general-stechkin-q-intro} for $q=2$ in \cite[\rus{Лемма}~3]{Stechkin1951} and \cite[\rus{Лемма}~1]{Stechkin1955} where it is proved that $C_1(2)\leq\frac{2}{\sqrt{3}}$.
Gao \cite[Theorem~1]{Gao2011} provides further improvement by showing that $C_1(q)=(q-1)^{\frac{1}{q}}$ even for $q_0\leq q<\infty$, where $q_0\approx 2.8855$ is a solution of the equation
$2^{\frac{1}{q-1}}\lr{(q-1)^{\frac{q}{q-1}}-(q-1)}-\lr{1+\frac{3-q}{2}}^{\frac{q}{q-1}}=0$.
De Bruijn \cite[p.~174]{DeBruijn1958} reports that $C_1(2)=1.1064957714$ \enquote{with an error of at most $9$ units at the last decimal place.}

Stechkin \cite[\rus{Лемма}~1]{Stechkin1955} is first to address the constant $c_1(2)$; he asserts $c_1(2)\leq 2$ and conjectures an improvement to $c_1(2)\leq\frac{\piup}{2}$ but proves neither claim; see also \cite[Section~7.4]{DungTeUl2018} for a historical discussion.
The existence of constants validating the inequalities \eqref{eq:general-stechkin-q-intro} and \eqref{eq:general-weak-l1-q-intro} is due to Pietsch \cite[Example~1 on p.~123]{Pietsch1981}, see also \cite[Theorem~4]{DeVore1998}.
Bennett \cite[Theorem~3]{Bennett1988} shows that 
\begin{equation*}
c_1(q)=\frac{\piup}{q\sin(\frac{\piup}{q})}
\end{equation*}
in \eqref{eq:general-stechkin-q-intro}, thus confirming Stechkin's conjecture for $q=2$, see \cref{fig:left-stechkin-q} for an illustration.
Hardy, Littlewood, and Pólya \cite[Theorem~337]{HardyLiPo1934} prove 
\begin{equation*}
\bar{C}_1(q)=(q-1)^{\frac{1}{q}}
\end{equation*}
in \eqref{eq:general-stechkin-continuous-intro}, depicted in \cref{fig:stechkin-continuous}.

The contribution of the present paper is as follows.
First, we give an alternative proof for the optimality of $c_1(2)=\frac{\piup}{2}$ in \eqref{eq:general-stechkin-q-intro} which uses an elementary insight from convex optimization.

Second, we extend the upper bound for $C_1(q)\leq 2\lr{2\frac{q}{q-1}-1}^{-\frac{q-1}{q}}$ proved by Levin and Stechkin for $\frac{5}{3}\leq q<3$ to $1<q<\infty$ via \cref{result:right-stechkin-q}.
A more detailed analysis of the same argument leads to $C_1(2)\leq 1.1086983$ in \cref{result:right-stechkin-upper-bound}.

Third, we improve the upper bounds for $C_1(q)$ from the literature when $1\leq q\leq\frac{2+\ln(2)}{2-\ln(2)}$ and $q\neq 2$.
Summarizing, the currently best known bounds for the constant $C_1(q)$ are
\begin{equation*}
C_1(q)\begin{cases}\leq\lr{\frac{\ee\ln(2)}{\sqrt{2}}}^{1-\frac{1}{q}},&1\leq q\leq\frac{2+\ln(2)}{2-\ln(2)},\\
\approx 1.1064957714,&q=2,\\
\leq 2\lr{2\frac{q}{q-1}-1}^{-\frac{q-1}{q}},&\frac{2+\ln(2)}{2-\ln(2)}<q< q_0,\\
=(q-1)^{\frac{1}{q}}&q_0\leq q\leq \infty
\end{cases}
\end{equation*}
with $q_0\approx 2.8855$, see \cref{result:right-stechkin-q,result:improved-right-stechkin-q} and \cref{fig:right-stechkin-q} for an illustration.

Fourth, we determine the optimal constants in \eqref{eq:general-weak-l1-q-intro} as 
\begin{equation*}
c_{1,\infty}(q)=\zeta(q)^{\frac{1}{q}}\text{\quad and\quad}C_{1,\infty}(q)=\lr{\frac{1}{q}}^{-\frac{1}{q}}\lr{1-\frac{1}{q}}^{-\lr{1-\frac{1}{q}}}
\end{equation*}
where $\zeta(\cdot)$ denotes the Riemann zeta-function, see \cref{result:left-weak-q,result:right-weak-q,fig:left-weak-q,fig:right-weak-q}.
Next, we show that
\begin{equation*}
\bar{c}_1(q)=c_1(q)=\frac{\piup}{q\sin(\frac{\piup}{q})}
\end{equation*}
in \eqref{eq:general-stechkin-continuous-intro}, see \cref{result:left-stechkin-continuous,fig:stechkin-continuous}, as well as
\begin{equation*}
\bar{c}_{1,\infty}(q)=(q-1)^{-\frac{1}{q}}\text{\quad and\quad}\bar{C}_{1,\infty}(q)=C_{1,\infty}(q)=\lr{\frac{1}{q}}^{-\frac{1}{q}}\lr{1-\frac{1}{q}}^{-\lr{1-\frac{1}{q}}},
\end{equation*}
see \cref{result:left-weak-continuous,result:right-weak-continuous,fig:weak-l1-continuous}.

The inequalities discussed in this paper have applications in interpolation theory and nonlinear approximation.
On the one hand, \eqref{eq:general-stechkin-continuous-intro} can be used in the proof of the Marcinkiewicz interpolation theorem, see \cite[Theorem~1.3.1]{BerghLo1976}.
On the other hand \eqref{eq:general-stechkin-q-intro} and \eqref{eq:general-weak-l1-q-intro} play a role in the characterization of the approximation spaces $\mathcal{A}_r^\alpha(\mathcal{H})$, i.e., the set of elements $f$ of the infinite-dimensional separable Hilbert space $\mathcal{H}$ for which the quasi-norm
\begin{equation*}
\mnorm{f}_{\mathcal{A}_r^\alpha(\mathcal{H})}\defeq\begin{cases}
\lr{\sum_{n=1}^\infty(n^\alpha E_n(f)_{\mathcal{H}})^r \frac{1}{n}}^{\frac{1}{r}},&0<r<\infty,\\
\sup_{n\in\NN}n^\alpha E_n(f)_{\mathcal{H}},&r=\infty
\end{cases}
\end{equation*}
is finite.
Here, $\alpha>0$ and $E_n(f)$ denote the infimal distance of $f$ to elements of the form $\sum_{k\in\Lambda}\lambda_k e_k$, where $\Lambda\subset \NN$ is a set of cardinality $n-1$ and $(e_k)_{k=1}^\infty$ is an orthonormal basis of $\mathcal{H}$.
The consequences for the optimal constants in the inequalities stated by DeVore in \cite[Theorem~4]{DeVore1998} on sparse approximation in infinite-dimensional separable real Hilbert spaces are outlined in \cref{chap:devore}.

A recurring technique in this paper is that we prove the inequalities under consideration for finite sequences first, which then yields the general claim through a limiting process.
These finite-dimensional versions will be used to gain some geometric insight to \eqref{eq:general-stechkin-q-intro}.

\section{The strong discrete Stechkin inequality}\label{chap:stechkin}
In this section, we are concerned with the minimal constants $c_1(q)$ and $C_1(q)>0$ in the inequality
\begin{equation}
\frac{1}{c_1(q)}\sum_{n=1}^\infty\lr{\frac{1}{n}\sum_{k=n}^\infty a_k^q}^{\frac{1}{q}}\leq \sum_{n=1}^\infty a_n\leq C_1(q)\sum_{n=1}^\infty\lr{\frac{1}{n}\sum_{k=n}^\infty a_k^q}^{\frac{1}{q}}\label{eq:general-stechkin-q}
\end{equation}
for sequences $(a_n)_{n\in\NN}$ with $a_1\geq a_2\geq\ldots\geq 0$ and for $1<q\leq\infty$ with the appropriate modification for $q=\infty$, as indicated in \cref{chap:intro}.
The monotonicity assumption on $(a_n)_{n\in\NN}$ gives $\sup\setcond{a_k}{k\geq n}=a_n$, so $c_1(\infty)=C_1(\infty)=1$.
For $q=1$, we have $C_1(1)=1$ because
\begin{equation*}
\sum_{k=1}^\infty a_k=\sum_{k=1}^\infty \sum_{n=1}^k \frac{a_k}{k}\leq \sum_{k=1}^\infty \sum_{n=1}^k \frac{a_k}{n}= \sum_{n=1}^\infty \frac{1}{n}\sum_{k=n}^\infty a_k
\end{equation*}
which holds as an equality when $a_n=0$ for all $n\geq 2$.

\subsection{On the optimal lower constant}
We give a rather elementary proof of the optimality of $c_1(2)=\frac{\piup}{2}$ in \eqref{eq:general-stechkin-q}, which is due to Bennett \cite[Theorem~3]{Bennett1988}.
Our proof uses an elementary insight from convex optimization.
\begin{Satz}\label{result:left-stechkin}
The minimal constant $c_1(2)>0$ for which
\begin{equation*}
\sum_{n=1}^\infty\lr{\frac{1}{n}\sum_{k=n}^\infty a_k^2}^{\frac{1}{2}}\leq c_1(2)\sum_{n=1}^\infty a_n
\end{equation*}
holds for all sequences $(a_n)_{n\in\NN}$ with $a_1\geq a_2\geq\ldots\geq 0$ is $c_1(2)=\frac{\piup}{2}$.
\end{Satz}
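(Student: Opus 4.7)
The plan is to reduce to finite sequences by truncation and then invoke the convex-optimization principle that a convex function on a compact polytope attains its maximum at an extreme point. By homogeneity and monotone convergence, it suffices to show that the functional
\[
F_N(a) := \sum_{n=1}^N \left(\tfrac{1}{n}\sum_{k=n}^N a_k^2\right)^{1/2}
\]
is bounded above by $\piup/2$ on the compact polytope $K_N = \{a\in\RR^N : a_1\geq\ldots\geq a_N\geq 0,\ \sum_n a_n = 1\}$ for every $N\in\NN$. Since each term of $F_N$ is a rescaled Euclidean norm of a tail of $a$, $F_N$ is convex, and hence attains its maximum on $K_N$ at an extreme point.

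To locate these extreme points, I would pass to the variables $b_n := a_n - a_{n+1}$ (with $a_{N+1}:=0$), which biject $K_N$ with the simplex $\{b\in\RR_{\geq 0}^N : \sum_k k b_k = 1\}$. The extreme points of the latter are visibly the scaled unit vectors $m^{-1}\mathbf{e}_m$ for $m=1,\ldots,N$, which pull back to $e_m = m^{-1}(\underbrace{1,\ldots,1}_{m},0,\ldots,0)\in K_N$. A direct substitution gives
\[
F_N(e_m) = \frac{1}{m}\sum_{n=1}^m\sqrt{\tfrac{m+1-n}{n}} = \frac{m+1}{m}\,R_m, \qquad R_m := \frac{1}{m+1}\sum_{n=1}^m g\!\left(\tfrac{n}{m+1}\right),
\]
where $g(x):=\sqrt{(1-x)/x}$. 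The quantity $R_m$ is precisely the right Riemann sum (with $m+1$ equal subintervals) for $\int_0^1 g(x)\,\dd x$, which evaluates to $\piup/2$ via $x=\sin^2\theta$.

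For the easy direction $c_1(2)\geq \piup/2$, I would test the inequality against $a=e_m$: since $g$ is monotone and integrable, $R_m\to\piup/2$ as $m\to\infty$, hence $F_N(e_m)\to \piup/2$.

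The main obstacle is the upper bound $F_N(e_m)\leq \piup/2$ for every finite $m$, because the naive Riemann-sum estimate $R_m\leq \piup/2$ only yields $F_N(e_m)\leq \tfrac{m+1}{m}\cdot\tfrac{\piup}{2}$, which overshoots the target. To sharpen it, I would lower-bound the Riemann-sum error $\piup/2-R_m$ by the (positive) contribution from the first subinterval $[0,1/(m+1)]$ alone. A clean calculation via the antiderivative $\theta+\sin\theta\cos\theta$ of $g\,\dd x$ under $x=\sin^2\theta$ shows that this first-subinterval error equals exactly $\arcsin(1/\sqrt{m+1})$. A short trigonometric estimate — for instance $\cos y\geq 1-2y/\piup$ on $[0,\piup/2]$, which rearranges to $\sin(\piup/(2k))\leq 1/\sqrt{k}$ for $k\geq 2$ — then yields $\arcsin(1/\sqrt{m+1})\geq \piup/(2(m+1))$ for every $m\geq 1$. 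Consequently $R_m \leq \tfrac{m}{m+1}\cdot\tfrac{\piup}{2}$ and $F_N(e_m)=\tfrac{m+1}{m}R_m\leq \piup/2$, closing the argument.
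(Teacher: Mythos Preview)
Your argument is correct and follows the same overall strategy as the paper: reduce to finite sequences, use convexity to locate the maximum of $F_N$ at the extreme points $e_m=m^{-1}(1,\ldots,1,0,\ldots,0)$, then compare the resulting sums to the integral $\int_0^1\sqrt{(1-x)/x}\,\dd x=\piup/2$. The differences lie in the execution of the two key steps. For the upper bound $F_N(e_m)\leq\piup/2$, the paper sandwiches the sum between two explicit integrals $f_{k_0}$ and $h_{k_0}$ and then proves, via somewhat lengthy derivative computations, that both bounding sequences are monotonically increasing with common limit $\piup/2$; you instead rewrite $F_N(e_m)=\tfrac{m+1}{m}R_m$ with $R_m$ a right Riemann sum for the decreasing integrand $g$, bound the Riemann error $\piup/2-R_m$ from below by the contribution of the first subinterval (which you compute exactly as $\arcsin(1/\sqrt{m+1})$), and finish with the concavity estimate $\cos y\geq 1-2y/\piup$ applied at $y=\piup/(m+1)$. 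This is shorter and avoids the monotonicity analysis entirely. For the passage from finite to infinite sequences, the paper gives an involved $\varepsilon$-splitting argument invoking an auxiliary $\ell_1$--$\ell_2$ tail estimate, whereas your appeal to monotone convergence---the truncated left-hand side $\sum_{n\leq N}(\tfrac{1}{n}\sum_{k=n}^N a_k^2)^{1/2}$ is termwise increasing in $N$---is both valid and considerably simpler.
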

\begin{proof}
\emph{Step 1. We prove the claim for finite sequences.}
Consider
\begin{align*}
&\norel\sup\setcond{\frac{\sum_{n=1}^\infty\lr{\frac{1}{n}\sum_{k=n}^\infty a_k^2}^{\frac{1}{2}}}{\sum_{n=1}^\infty a_n}}{a_1\geq a_2\geq\ldots\geq 0,a_{N+1}=0,\sum_{n=1}^\infty a_n<\infty}\\
&=\sup\setcond{\frac{\sum_{n=1}^N\lr{\frac{1}{n}\sum_{k=n}^N a_k^2}^{\frac{1}{2}}}{\sum_{n=1}^N a_n}}{a_1\geq a_2\geq\ldots\geq 0,a_{N+1}=0}\\
&=\sup\setcond{\sum_{n=1}^N\lr{\frac{1}{n}\sum_{k=n}^N a_k^2}^{\frac{1}{2}}}{(a_n)_{n\in\NN}\in\Delta_N}.
\end{align*}
The set $\Delta_N\defeq\setcond{(a_n)_{n\in\NN}}{a_1\geq a_2\geq\ldots\geq 0,a_{N+1}=0,\sum_{n=1}^N a_n=1}$ is determined by a single linear equality and $n$ linear inequalities in $a_1,\ldots,a_n$ and thus is a $(n-1)$-dimensional simplex in its $N$-dimensional linear span $V_N$.
Therefore, the restriction of the convex function 
\begin{align*}
V_N&\to\RR,\\(a_n)_{n\in\NN}&\mapsto\sum_{n=1}^N\lr{\frac{1}{n}\sum_{k=n}^N a_k^2}^{\frac{1}{2}}
\end{align*}
to $\Delta_N$ attains its supremum at one of the vertices of $\Delta_N$.
Similarly to the discussion in \cite[Section~2.1]{FoucartRa2013}, the vertices of $\Delta_N$ are precisely those points for which all but one of the defining inequalities are actually equalities.
This means that for each of the $N$ vertices of $\Delta_N$, there is a number $k_0\in\setn{1,\ldots,N}$ such that $a_1=\ldots =a_{k_0}> a_{k_0+1}=\ldots=a_N=0$.
Taking $\sum_{n=1}^Na_n=1$ into account, we obtain $a_1=\ldots =a_{k_0}=\frac{1}{k_0}$ and
\begin{equation*}
\sum_{n=1}^N\lr{\frac{1}{n}\sum_{k=n}^N a_k^2}^{\frac{1}{2}}=\sum_{n=1}^{k_0}\lr{\frac{1}{n}\sum_{k=n}^{k_0} \frac{1}{k_0^2}}^{\frac{1}{2}}=\sum_{n=1}^{k_0}n^{-\frac{1}{2}}\lr{\frac{k_0-n+1}{k_0^2}}^{\frac{1}{2}}.
\end{equation*}
Therefore
\begin{align*}
&\norel\sup\setcond{\frac{\sum_{n=1}^\infty\lr{\frac{1}{n}\sum_{k=n}^\infty a_k^2}^{\frac{1}{2}}}{\sum_{n=1}^\infty a_n}}{a_1\geq a_2\geq\ldots\geq 0,a_{N+1}=0,\sum_{n=1}^\infty a_n<\infty}\\
&=\sup\setcond{\sum_{n=1}^N\lr{\frac{1}{n}\sum_{k=n}^N a_k^2}^{\frac{1}{2}}}{(a_n)_{n\in\NN}\in\Delta_N}\\
&=\sup\setcond{\sum_{n=1}^{k_0}n^{-\frac{1}{2}}\lr{\frac{k_0-n+1}{k_0^2}}^{\frac{1}{2}}}{k_0\in\setn{1,\ldots,N}}.
\end{align*}
Clearly, this quantity is monotonically increasing in $N$ because not only the set $\setcond{(a_n)_{n\in\NN}}{a_1\geq a_2\geq\ldots\geq 0,a_{N+1}=0}$ is, but also $\Delta_N$ and its vertex set are.
We will show that the sequence $\lr{\sum_{n=1}^{k_0}n^{-\frac{1}{2}}\lr{\frac{k_0-n+1}{k_0^2}}^{\frac{1}{2}}}_{k_0\in\NN}$ is bounded above and the supremum is $\frac{\piup}{2}$.
Then we automatically know that
\begin{align*}
\frac{\piup}{2}&=\sup_{k_0\in\NN}\sum_{n=1}^{k_0}n^{-\frac{1}{2}}\lr{\frac{k_0-n+1}{k_0^2}}^{\frac{1}{2}}\\
&=\sup_{N\in\NN}\sup\setcond{\sum_{n=1}^{k_0}n^{-\frac{1}{2}}\lr{\frac{k_0-n+1}{k_0^2}}^{\frac{1}{2}}}{k_0\in\setn{1,\ldots,N}}\\
&=\sup_{N\in\NN}\sup\setcond{\frac{\sum_{n=1}^\infty\lr{\frac{1}{n}\sum_{k=n}^\infty a_k^2}^{\frac{1}{2}}}{\sum_{n=1}^\infty a_n}}{a_1\geq a_2\geq\ldots\geq 0,a_{N+1}=0}.
\end{align*}
The function $g:(0,k_0+1)\to [0,\infty)$, $g(t)\defeq t^{-\frac{1}{2}}\lr{\frac{k_0-t+1}{k_0^2}}^{\frac{1}{2}}$ is monotonically decreasing, so
\begin{equation}
f_{k_0}\defeq\int_1^{k_0+1}g(t)\dd t\leq\sum_{n=1}^{k_0} g(n) \leq g(1)+\int_1^{k_0} g(t)\dd t\eqdef h_{k_0}.\label{eq:sum-integral-estimate}
\end{equation}

For the computation of the antiderivative $\int t^{-\frac{1}{2}}(k_0-t+1)^{\frac{1}{2}}\dd t$, the change of variables $u=\sqrt{\frac{t}{k_0-t+1}}$ or $t=\frac{u^2(k_0+1)}{u^2+1}$ yields $\frac{\dd t}{\dd u}=\frac{2u(k_0+1)}{(u^2+1)^2}$ and
\begin{align*}
&\int t^{-\frac{1}{2}}(k_0-t+1)^{\frac{1}{2}}\dd t\\
&=2(k_0+1)\int \frac{1}{(u^2+1)^2}\dd u\\
&=2(k_0+1)\lr{\frac{u}{2(u^2+1)}+\frac{1}{2}\arctan(u)}\\
&=t^{\frac{1}{2}}(k_0-t+1)^{\frac{1}{2}}+(k_0+1)\arctan\lr{\lr{\frac{t}{k_0-t+1}}^{\frac{1}{2}}}.
\end{align*}
Plugging in the integration bounds, we arrive at
\begin{align*}
\int_1^{k_0+1} t^{-\frac{1}{2}}(k_0-t+1)^{\frac{1}{2}}\dd t&=(k_0+1)\frac{\piup}{2}-k_0^{\frac{1}{2}}-(k_0+1)\arctan\lr{k_0^{-\frac{1}{2}}},\\
\int_1^{k_0} t^{-\frac{1}{2}}(k_0-t+1)^{\frac{1}{2}}\dd t&=\lr{\arctan\lr{k_0^{\frac{1}{2}}}-\arctan\lr{k_0^{-\frac{1}{2}}}}(k_0+1).
\end{align*}
It follows that $\lim_{k_0\to\infty}f_{k_0}=\lim_{k_0\to\infty}h_{k_0}=\frac{\piup}{2}$.
Now, if we can show that the sequences $(f_{k_0})_{k_0\in\NN}$ and $(h_{k_0})_{k_0\in\NN}$ are monotonically increasing, then \eqref{eq:sum-integral-estimate} implies
\begin{align*}
\frac{\piup}{2}&=\lim_{k_0\to\infty}f_{k_0}=\sup_{k_0\in\NN}f_{k_0}\leq \sup_{k_0\in\NN}\sum_{n=1}^{k_0}n^{-\frac{1}{2}}\lr{\frac{k_0-n+1}{k_0^2}}^{\frac{1}{2}}\\
&\leq\sup_{k_0\in\NN}h_{k_0}=\lim_{k_0\to\infty}h_{k_0}=\frac{\piup}{2}
\end{align*}
and we are done.
Indeed, one has
\begin{align*}
\frac{\p}{\p x}\int_1^{x+1} \lr{\frac{x-t+1}{tx^2}}^{\frac{1}{2}}\dd t&=\frac{\p}{\p x}\frac{(x+1)\frac{\piup}{2}-x^{\frac{1}{2}}-(x+1)\arctan\lr{x^{-\frac{1}{2}}}}{x}\\
&=\frac{x^{\frac{1}{2}}-\arctan\lr{x^{\frac{1}{2}}}}{x^2}>0
\end{align*}
and
\begin{align*}
&\norel\frac{\p}{\p x}\lr{x^{-\frac{1}{2}}+\int_1^x \lr{\frac{x-t+1}{tx^2}}^{\frac{1}{2}}\dd t}\\
&=\frac{\p}{\p x}\lr{x^{-\frac{1}{2}}+\lr{\arctan\lr{x^{\frac{1}{2}}}-\arctan\lr{x^{-\frac{1}{2}}}}\frac{x+1}{x}}\\
&=\frac{x^{\frac{1}{2}}-4\arctan\lr{x^{\frac{1}{2}}}+\piup}{2x^2}>0
\end{align*}
for all $x\geq 1$.
For the latter claim, note that the function $(0,\infty)\ni x\mapsto \sqrt{x}-4\arctan\lr{\sqrt{x}}+\piup$ has a global minimizer at $x=3$ with minimum $\sqrt{3}-\frac{\piup}{3}>0$, which can be read off the signs of its derivative $x\mapsto\frac{x-3}{2\sqrt{x}(x+1)}$.

\emph{\qquad Step 2. We prove the claim for all sequences.}
It remains to show that
\begin{equation*}
\frac{\sum_{n=1}^\infty\lr{\frac{1}{n}\sum_{k=n}^\infty a_k^2}^{\frac{1}{2}}}{\sum_{n=1}^\infty a_n}\leq\frac{\piup}{2}
\end{equation*}
for all sequences $(a_n)_{n\in\NN}$ with $a_1\geq a_2\geq \ldots\geq 0$ and $\sum_{n=1}^\infty a_n\leq\infty$.
Let $\eps>0$.
Then there exists $N_1,N_2,N_3\in\NN$ such that
\begin{equation*}
\sum_{n=N+1}^\infty\lr{\frac{1}{n}\sum_{k=n}^\infty a_k^2}^{\frac{1}{2}}<\frac{\eps}{2}
\end{equation*}
for all $N> N_1$, and
\begin{equation*}
\sum_{k=\floor{\frac{N}{2}}}^\infty a_k<\frac{\eps}{8}
\end{equation*}
for all $N> N_2$, and
\begin{equation*}
\sqrt{N}\floor{\frac{N}{2}}^{-\frac{1}{2}}\leq 2
\end{equation*}
for all $N>N_3$.
Also, for fixed $N\in\NN$ and $M\geq N+1$, apply \cite[Proposition~2.3]{FoucartRa2013} with $p=1$, $q=2$, and $s=\frac{N}{2}$ to the sequence $(a_n)_{n\geq\floor{\frac{N}{2}}}$ to obtain
\begin{equation*}
\lr{\sum_{k=N+1}^\infty a_k^2}^{\frac{1}{2}}\leq \floor{\frac{N}{2}}^{-\frac{1}{2}}\sum_{k=\floor{\frac{N}{2}}}^\infty a_k.
\end{equation*}
For $N>\max\setn{N_1,N_2,N_3}$, we conclude
\begin{align*}
\sum_{n=1}^N\lr{\frac{1}{n}\sum_{k=N+1}^\infty a_k^2}^{\frac{1}{2}}&\leq\sum_{n=1}^N n^{-\frac{1}{2}} \floor{\frac{N}{2}}^{-\frac{1}{2}}\sum_{k=\floor{\frac{N}{2}}}^\infty a_k\leq\int_0^N x^{-\frac{1}{2}}\dd x\floor{\frac{N}{2}}^{-\frac{1}{2}}\sum_{k=\floor{\frac{N}{2}}}^\infty a_k\\
&=2\sqrt{N}\floor{\frac{N}{2}}^{-\frac{1}{2}}\sum_{k=\floor{\frac{N}{2}}}^\infty a_k\leq 4\sum_{k=\floor{\frac{N}{2}}}^\infty a_k<\frac{\eps}{2}
\end{align*}
and
\begin{align*}
&\norel\sum_{n=1}^\infty\lr{\frac{1}{n}\sum_{k=n}^\infty a_k^2}^{\frac{1}{2}}\\
&=\sum_{n=1}^N\lr{\frac{1}{n}\sum_{k=n}^\infty a_k^2}^{\frac{1}{2}}+\sum_{n=N+1}^\infty\lr{\frac{1}{n}\sum_{k=n}^\infty a_k^2}^{\frac{1}{2}}\\
&\leq \sum_{n=1}^N\lr{\frac{1}{n}\sum_{k=n}^N a_k^2}^{\frac{1}{2}}+\sum_{n=1}^N\lr{\frac{1}{n}\sum_{k=N+1}^\infty a_k^2}^{\frac{1}{2}}+\sum_{n=N+1}^\infty\lr{\frac{1}{n}\sum_{k=n}^\infty a_k^2}^{\frac{1}{2}}\\
&<\frac{\piup}{2}\sum_{n=1}^Na_n+\frac{\eps}{2}+\frac{\eps}{2}\\
&=\frac{\piup}{2}\sum_{n=1}^\infty a_n+\eps.
\end{align*}
Taking the limit $\eps\downarrow 0$ proves the assertion.
\end{proof}

The precise values for $c_1(q)$ from \cite[Theorem~3]{Bennett1988} are illustrated in \cref{fig:left-stechkin-q}.
\begin{figure}
\begin{center}
\begin{tikzpicture}[line cap=round,line join=round,>=stealth,x=4.0cm,y=0.5cm]
\draw[black!50] (0,1)--(1,1);
\draw[black!50] (0,3)--(1,3);
\draw[black!50] (0,5)--(1,5);
\draw[black!50] (0.5,0)--(0.5,5);
\draw[black!50] (1,0)--(1,5);
\draw[line width=1pt,->] (0,0)--(1.2,0);
\draw[line width=1pt,->] (0,0)--(0,6.5);
\draw (0,6.5) node[left]{$c_1(q)$};
\draw (1.2,0) node[below]{$1/q$};
\draw (0,1) node[left]{$1$};
\draw (0,3) node[left]{$3$};
\draw (0,5) node[left]{$5$};
\draw (1,0) node[below]{$1$};
\draw (0.5,0) node[below]{$0.5$};

\draw[color=black,line width=1pt,smooth,samples=200,domain=0.001:0.8] plot(\x,{3.141592653589793*(\x)/sin((3.141592653589793*(\x))*180/pi)});
\end{tikzpicture}
\end{center}
\caption{The function $1/q\mapsto \frac{\piup}{q\sin(\frac{\piup}{q})}$.\label{fig:left-stechkin-q}}
\end{figure}
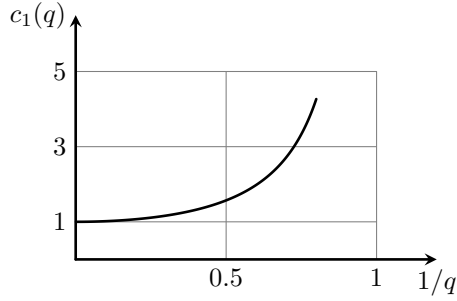

\begin{Bem}
Our proof of \cref{result:left-stechkin} already emphasizes the importance of finite sequences for our considerations which will be encountered again in the proof of \cref{result:right-weak-q}.
Let us draw a geometric picture.
Fix $N\in \NN$ and $q\in (1,\infty)$.
The quantities
\begin{align*}
\mnorm{a}_1&=\sum_{n=1}^N\abs{a_n}\\
\intertext{and}
\gamma_N(a)&\defeq\sum_{n=1}^N\lr{\frac{1}{n}\sum_{k=n}^N \abs{a_k}^q}^{\frac{1}{q}}
\end{align*}
define norms on $\RR^N$, whose closed unit balls shall be denoted by $B_1^N$ and $B_{1,q}^N$, respectively.
\cref{fig:q-balls} illustrates $B_{1,q}^N$ for several values of $q$.
\begin{figure}[H]
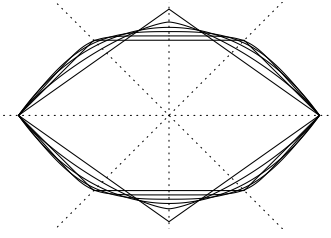

\begin{center}
\begin{tikzpicture}[line cap=round,line join=round,>=stealth,x=2cm,y=2cm]
\clip(-1.1,-0.75) rectangle (1.1,0.75);

\draw [dotted,domain=-1.1:1.1] plot(\x,\x);
\draw [dotted,domain=-1.1:1.1] plot(\x,{-\x});
\draw [dotted,domain=-1.1:1.1] plot(\x,0);
\draw [dotted,domain=-1.1:1.1] plot(0,\x);

\draw (0,0.70710678)--(1,0)--(0,-0.70710678)--(-1,0)--cycle;

\draw (1,0)--(0.5,0.5)--(-0.5,0.5)--(-1,0)--(-0.5,-0.5)--(0.5,-0.5)--cycle;

\draw plot[samples=250,domain=-1:1)] (\x,{-sqrt(4-2*(\x)^2)+sqrt(2)})--plot[samples=250,domain=1:-1)] (\x,{sqrt(4-2*(\x)^2)-sqrt(2)});

\begin{scope}[yscale=1,xscale=1]
\input{q3.tex}
\end{scope}
\begin{scope}[yscale=1,xscale=-1]
\input{q3.tex}
\end{scope}
\begin{scope}[yscale=-1,xscale=-1]
\input{q3.tex}
\end{scope}
\begin{scope}[yscale=-1,xscale=1]
\input{q3.tex}
\end{scope}

\begin{scope}[yscale=1,xscale=1]
\input{q6.tex}
\end{scope}
\begin{scope}[yscale=1,xscale=-1]
\input{q6.tex}
\end{scope}
\begin{scope}[yscale=-1,xscale=-1]
\input{q6.tex}
\end{scope}
\begin{scope}[yscale=-1,xscale=1]
\input{q6.tex}
\end{scope}

\begin{scope}[yscale=1,xscale=1]
\input{q1-4.tex}
\end{scope}
\begin{scope}[yscale=1,xscale=-1]
\input{q1-4.tex}
\end{scope}
\begin{scope}[yscale=-1,xscale=-1]
\input{q1-4.tex}
\end{scope}
\begin{scope}[yscale=-1,xscale=1]
\input{q1-4.tex}
\end{scope}
\end{tikzpicture}
\end{center}\caption{The unit balls of the norms $\gamma_N$ for $N=2$ and $q\in\setn{1,1.4,2,3,6,\infty}$.\label{fig:q-balls}}
\end{figure}

The inequality
\begin{equation}
\frac{1}{c_1(q)}\sum_{n=1}^N\lr{\frac{1}{n}\sum_{k=n}^N a_k^q}^{\frac{1}{q}}\leq\sum_{n=1}^N a_n\leq C_1(q)\sum_{n=1}^N\lr{\frac{1}{n}\sum_{k=n}^N a_k^q}^{\frac{1}{q}}\label{eq:finite-dimensional-general-stechkin}
\end{equation}
for all $a=(a_1,\ldots,a_N)\in\RR^N$ with $a_1\geq \ldots a_N\geq 0$ then translates to the following chain of set inclusions of convex bodies:
\begin{equation}
\frac{1}{c_1(q)}(B_1^N\cap K_N)\subset B_{1,q}^N\cap K_N\subset C_1(q)(B_1^N\cap K_N).\label{eq:optimal-containment-stechkin}
\end{equation}
Here $K_N\defeq \setcond{(a_1,\ldots,a_N)}{a_1\geq \ldots a_N\geq 0}$.
For understanding the shape of the convex bodies $B_{1,q}^N$, we note that norm $\gamma_N$ is the pointwise sum of the functions $\gamma_{n,N}:\RR^n\to\RR$ given by $\gamma_{n,N}(a)=\lr{\frac{1}{n}\sum_{k=n}^N a_k^q}^{\frac{1}{q}}$ for $n\in\setn{1,\ldots,N}$.
With ${}^\circ$ denoting the polar set with respect to the standard inner product, it follows that $B_{1,q}^N=\lr{B_{1,N}^\circ+\ldots+B_{N,N}^\circ}^\circ$, which is similar to the construction of the harmonic mean of convex bodies introduced by Firey in \cite{Firey1961}.
For $N=q=2$, the chain of set inclusions stated in \eqref{eq:optimal-containment-stechkin} with the optimal constants is illustrated in \cref{fig:optimal-containment-stechkin}.
This figure may also be used to convince oneself that for the left-hand side inequality in \eqref{eq:finite-dimensional-general-stechkin}, it is relevant to have $a\in K_N$, and thus monotonicity is also relevant for the left-hand side inequality in \eqref{eq:general-stechkin-q-intro}.
\end{Bem}

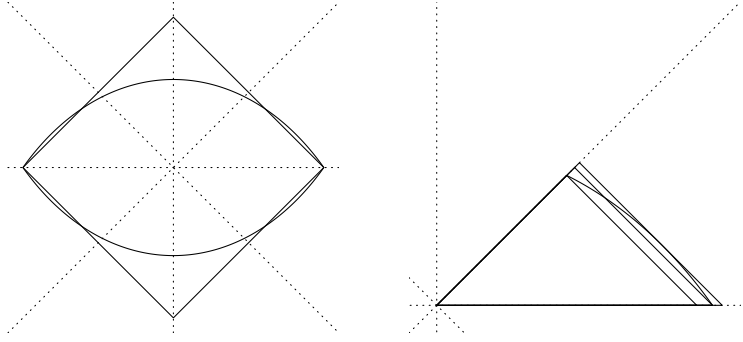
\begin{figure}[h!]
\begin{center}
\subfigure{
\begin{tikzpicture}[line cap=round,line join=round,>=stealth,x=2cm,y=2cm]
\clip(-1.1,-1.1) rectangle (1.1,1.1);

\draw [dotted,domain=-1.1:1.1] plot(\x,\x);
\draw [dotted,domain=-1.1:1.1] plot(\x,{-\x});
\draw [dotted,domain=-1.1:1.1] plot(\x,0);
\draw [dotted,domain=-1.1:1.1] plot(0,\x);
\draw plot[samples=250,domain=-1:1)] (\x,{-sqrt(4-2*(\x)^2)+sqrt(2)})--plot[samples=250,domain=1:-1)] (\x,{sqrt(4-2*(\x)^2)-sqrt(2)});
\draw (0,1)--(1,0)--(0,-1)--(-1,0)--cycle;
\end{tikzpicture}
}\qquad
\subfigure{
\begin{tikzpicture}[line cap=round,line join=round,>=stealth,x=3.666cm,y=3.666cm]
\clip(-0.1,-0.1) rectangle (1.1,1.1);

\draw [dotted,domain=-1.5:1.5] plot(\x,\x);
\draw [dotted,domain=-1.5:1.5] plot(\x,{-\x});
\draw [dotted,domain=-1.5:1.5] plot(\x,0);
\draw [dotted,domain=-1.5:1.5] plot(0,\x);
\draw (0,0)--plot[samples=250,domain={sqrt(2)/3}:1)] (\x,{sqrt(4-2*(\x)^2)-sqrt(2)})--cycle;
\draw (0,0)--(0.5,0.5)--(1,0)--cycle;
\draw[scale={2*sqrt(2)/3}] (0,0)--(0.5,0.5)--(1,0)--cycle;
\draw[scale={sqrt(2)*(sqrt(3)-1)}] (0,0)--(0.5,0.5)--(1,0)--cycle;
\end{tikzpicture}
}
\end{center}\caption{The unit balls of the norms $\gamma_N$ and $\mnorm{\cdot}_1$ for $N=q=2$ (left) and an optimally scaled version of their intersections with the cone $K_N$ (right).\label{fig:optimal-containment-stechkin}}
\end{figure}

\subsection{On the optimal upper constant}
In this section, we improve the known upper bounds on $C_1(q)$ for $1<q\leq\frac{2+\ln(2)}{2-\ln(2)}$.
The following result adapts Stechkin's proof technique in \cite[\rus{Лемма}~3]{Stechkin1951} and produces upper bounds for $C_1(q)$ from auxiliary sequences whose entrywise inverses in $\ell_{q^\prime}$ where $q^\prime$ is the Hölder conjugate of $q$.
\begin{Prop}\label{result:right-stechkin-q}
Let $1<q<\infty$, set $q^\prime=\frac{q}{q-1}$, and assume that $b=(b_n)_{n\in\NN_0}$ is a strictly monotonically increasing sequence with $b_0=0$ and $\sum_{n=1}^\infty\frac{1}{b_n^{q^\prime}}<\infty$.
Then for all $(a_n)_{n\in\NN}\in\ell_q$, we have
\begin{equation*}
\sum_{n=1}^\infty a_n\leq C_b(q)\sum_{n=1}^\infty\lr{\frac{1}{n}\sum_{k=n}^\infty a_k^q}^{\frac{1}{q}}
\end{equation*}
with $C_b(q)\defeq\sup\setcond{\lr{n^{\frac{q^\prime}{q}}(b_n-b_{n-1})^{q^\prime}\sum_{k=n}^\infty\frac{1}{b_k^{q^\prime}}}^{\frac{1}{q^\prime}}}{n\in\NN}$.
\end{Prop}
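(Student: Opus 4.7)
The plan is to combine a summation-by-parts identity with Hölder's inequality so that the auxiliary sequence $b$ enters precisely through the quantity $C_b(q)$. The key observation is that the definition of $C_b(q)$ can be rewritten as the pointwise bound
\begin{equation*}
(b_n-b_{n-1})\lr{\sum_{k=n}^\infty\frac{1}{b_k^{q^\prime}}}^{\frac{1}{q^\prime}}\leq C_b(q)\,n^{-\frac{1}{q}}\qquad\fall n\in\NN,
\end{equation*}
and the right-hand side is exactly the factor that converts $\lr{\sum_{k=n}^\infty a_k^q}^{\frac{1}{q}}$ into the Stechkin summand $\lr{\frac{1}{n}\sum_{k=n}^\infty a_k^q}^{\frac{1}{q}}$.

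First I would exploit that $b_0=0$ is a telescoping base point: writing $b_k=\sum_{n=1}^k(b_n-b_{n-1})$ and factoring $a_k=\frac{a_k}{b_k}\,b_k$ gives
\begin{equation*}
\sum_{k=1}^\infty a_k=\sum_{k=1}^\infty\frac{a_k}{b_k}\sum_{n=1}^k(b_n-b_{n-1})=\sum_{n=1}^\infty(b_n-b_{n-1})\sum_{k=n}^\infty\frac{a_k}{b_k}
\end{equation*}
after swapping the order of summation (which is legitimate since all terms are nonnegative once we reduce to $\abs{a_n}$, or by absolute convergence for $(a_n)\in\ell_q$ together with the assumption $\sum 1/b_n^{q^\prime}<\infty$).

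Next I would apply Hölder's inequality with exponents $q$ and $q^\prime$ to the inner sum, splitting $\frac{a_k}{b_k}=a_k\cdot\frac{1}{b_k}$:
\begin{equation*}
\sum_{k=n}^\infty\frac{a_k}{b_k}\leq\lr{\sum_{k=n}^\infty a_k^q}^{\frac{1}{q}}\lr{\sum_{k=n}^\infty\frac{1}{b_k^{q^\prime}}}^{\frac{1}{q^\prime}}.
\end{equation*}
Plugging this into the previous identity and invoking the pointwise bound induced by the definition of $C_b(q)$ then yields
\begin{equation*}
\sum_{n=1}^\infty a_n\leq\sum_{n=1}^\infty(b_n-b_{n-1})\lr{\sum_{k=n}^\infty\frac{1}{b_k^{q^\prime}}}^{\frac{1}{q^\prime}}\lr{\sum_{k=n}^\infty a_k^q}^{\frac{1}{q}}\leq C_b(q)\sum_{n=1}^\infty n^{-\frac{1}{q}}\lr{\sum_{k=n}^\infty a_k^q}^{\frac{1}{q}},
\end{equation*}
which is the desired inequality after absorbing the factor $n^{-1/q}$ into the Stechkin summand.

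There is no serious obstacle: the main ingredients are the telescoping identity encoded in $b_0=0$ and a single Hölder step, and the summability hypothesis $\sum 1/b_n^{q^\prime}<\infty$ guarantees that the Hölder right-hand side is finite so that the Fubini-type exchange of summation order is justified. The only care one needs is with convergence of the double series and with reducing to nonnegative $a_n$ (via $|a_n|$) if the sequence is signed; both are routine given $(a_n)\in\ell_q$ and the assumed decay of $1/b_n$.
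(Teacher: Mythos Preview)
Your proof is correct and follows essentially the same approach as the paper: telescope $b_k=\sum_{n=1}^k(b_n-b_{n-1})$, interchange summation, apply H\"older with exponents $q,q'$ to the inner sum, and then invoke the definition of $C_b(q)$ termwise. The only cosmetic difference is that the paper inserts the factor $n^{-1/q}\cdot n^{1/q}$ directly into the H\"older step rather than separating out the pointwise bound as you do.
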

\begin{proof}
Consider
\begin{align*}
&\norel\sum_{n=1}^\infty a_n=\sum_{n=1}^\infty\frac{a_n}{b_n}b_n=\sum_{n=1}^\infty\frac{a_n}{b_n}\sum_{k=1}^n(b_k-b_{k-1})\\
&=\sum_{n=1}^\infty(b_n-b_{n-1})\sum_{k=n}^\infty\frac{a_k}{b_k}\leq\sum_{n=1}^\infty(b_n-b_{n-1})\lr{\frac{1}{n}\sum_{k=n}^\infty a_k^q}^{\frac{1}{q}}\lr{n^{\frac{q^\prime}{q}}\sum_{k=n}^\infty\frac{1}{b_k^{q^\prime}}}^{\frac{1}{q^\prime}}\\
&=\sum_{n=1}^\infty\lr{\frac{1}{n}\sum_{k=n}^\infty a_k^q}^{\frac{1}{q}}\lr{n^{\frac{q^\prime}{q}}(b_n-b_{n-1})^{q^\prime}\sum_{k=n}^\infty\frac{1}{b_k^{q^\prime}}}^{\frac{1}{q^\prime}}\leq C_b(q)\sum_{n=1}^\infty\lr{\frac{1}{n}\sum_{k=n}^\infty a_k^q}^{\frac{1}{q}}
\end{align*}
where $C_b(q)$ is chosen as stated in the assertion.
\end{proof}
We investigate the choice $b_k=(k(k+1))^p$ for $p\in (\frac{1}{2q^\prime},1]$.
If we set
\begin{equation*}
A_n\defeq n^{\frac{q^\prime}{q}}((n(n+1))^p-(n(n-1))^p)^{q^\prime}\sum_{k=n}^\infty\frac{1}{(k(k+1))^{q^\prime p}}
\end{equation*}
for $n\in\NN$, then $C(p,q)\defeq\sup\setconds{A_n^\frac{1}{q^\prime}}{n\in\NN}$ the constant defined in \cref{result:right-stechkin-q} for our particular choice of the sequence $(b_n)_{n\in\NN}$.
\begin{Lem}\label{result:upper-bound-q}
Let $p>\frac{1}{2q^\prime}$.
For all $n\in\NN$, we have
\begin{equation}
A_n\leq\lr{\frac{(n+1)^p-(n-1)^p}{n^{p-1}}}^{q^\prime}\frac{1}{2q^\prime p-1}.\label{eq:upper-bound-q}
\end{equation}
\end{Lem}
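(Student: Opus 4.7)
The plan is to extract the $n$-dependence from $A_n$ and thereby reduce the lemma to a pure tail-sum estimate. First, the factorization $(n(n+1))^p - (n(n-1))^p = n^p\bigl((n+1)^p - (n-1)^p\bigr)$ gives
\begin{equation*}
A_n = n^{\frac{q'}{q} + pq'}\bigl((n+1)^p - (n-1)^p\bigr)^{q'}\sum_{k=n}^\infty\frac{1}{(k(k+1))^{\alpha}}, \qquad \alpha := q'p.
\end{equation*}
The identity $\frac{q'}{q} + pq' + (p-1)q' = 2q'p - 1$ balances the powers of $n$ against $n^{(p-1)q'}$ on the right-hand side of the claim, so the factor $\bigl((n+1)^p-(n-1)^p\bigr)^{q'}/n^{(p-1)q'}$ cancels on both sides of the target inequality, and the lemma becomes equivalent to
\begin{equation*}
\sum_{k=n}^\infty\frac{1}{(k(k+1))^{\alpha}} \leq \frac{1}{(2\alpha - 1)\, n^{2\alpha - 1}}.
\end{equation*}

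The plan for this tail estimate is a termwise comparison with an integral: I aim to show the pointwise bound
\begin{equation*}
\frac{1}{(k(k+1))^{\alpha}} \leq \int_k^{k+1}\frac{\dd t}{t^{2\alpha}},\qquad k\in\NN,
\end{equation*}
which, once summed over $k\geq n$, telescopes the right-hand side to $\int_n^\infty t^{-2\alpha}\,\dd t = 1/((2\alpha-1)n^{2\alpha-1})$, exactly what is required. The case $\alpha = 1$ holds as an identity, since both sides equal $\frac{1}{k} - \frac{1}{k+1}$ by partial fractions. For general $\alpha$ the substitution $t = u\sqrt{k(k+1)}$ in the integral reduces the pointwise claim to a dimension-free inequality in the single variable $r := \sqrt{(k+1)/k} > 1$, namely
\begin{equation*}
r\bigl(r^{2\alpha-1} - r^{1-2\alpha}\bigr) \geq (2\alpha-1)(r^2 - 1),
\end{equation*}
in which the dependence on $k$ enters only through $r$.

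The principal obstacle is proving this univariate inequality across the whole parameter range $\alpha > \tfrac{1}{2}$: both sides vanish at $r = 1$ with matching leading-order derivatives (corresponding to the $\alpha = 1$ equality case and the $k\to\infty$ limit), so confirming the correct sign at second order requires a careful convexity analysis of $r \mapsto r^{2\alpha-1}$, together with the AM--GM control $k(k+1) \leq (k+\tfrac{1}{2})^2$ to handle the residual term. The algebraic reduction in the first paragraph is essentially bookkeeping; the weight of the proof falls on this convexity estimate.
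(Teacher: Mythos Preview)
Your plan coincides with the paper's: reduce to $\sum_{k\geq n}(k(k+1))^{-\alpha}\leq (2\alpha-1)^{-1}n^{1-2\alpha}$ via the termwise comparison $(k(k+1))^{-\alpha}\leq\int_k^{k+1}t^{-2\alpha}\,\dd t$. The paper, however, dispatches the pointwise step in one line using the very identity you already note for $\alpha=1$: since $\frac{1}{k(k+1)}=\int_k^{k+1}x^{-2}\,\dd x$ over a unit-length interval, Jensen's inequality for the convex map $t\mapsto t^{\alpha}$ gives $\bigl(\int_k^{k+1}x^{-2}\,\dd x\bigr)^{\alpha}\leq\int_k^{k+1}x^{-2\alpha}\,\dd x$ immediately. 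Your substitution to the variable $r=\sqrt{(k+1)/k}$ is correct but an unnecessary detour, and you leave the resulting univariate inequality unproved; the AM--GM bound $k(k+1)\leq(k+\tfrac12)^2$ you invoke points the wrong way, as it yields a \emph{lower} bound on $(k(k+1))^{-\alpha}$ where you need an upper one.

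A further warning about the range you flag as the ``principal obstacle'': the termwise comparison is simply false for $\tfrac12<\alpha<1$. Writing $r=e^x$, your inequality becomes $\sinh\bigl((2\alpha-1)x\bigr)\geq(2\alpha-1)\sinh x$, which holds by comparing Taylor coefficients when $2\alpha-1\geq 1$ but \emph{reverses} when $0<2\alpha-1<1$; concretely, $(1\cdot 2)^{-0.6}\approx 0.660>0.647\approx\int_1^2 t^{-1.2}\,\dd t$. So no convexity analysis can rescue that regime. The paper's Jensen step has the same limitation, and every application of the lemma in the paper in fact satisfies $q'p\geq 1$.
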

\begin{proof}
The assertion follows from
\begin{align*}
\sum_{k=n}^\infty\frac{1}{b_k^{q^\prime}}&=\sum_{k=n}^\infty\frac{1}{(k(k+1))^{q^\prime p}}=\sum_{k=n}^\infty\lr{\int_k^{k+1}x^{-2}\dd x}^{q^\prime p}\\
&\leq \sum_{k=n}^\infty\int_k^{k+1}x^{-2q^\prime p}\dd x=\frac{1}{2q^\prime p-1}n^{-2q^\prime p+1},
\end{align*}
for which $p>\frac{1}{2q^\prime}$ is crucial.
\end{proof}

We can say even more about the right-hand side of \eqref{eq:upper-bound-q}.
\begin{Lem}\label{result:monotonicity}
Let $0<p\leq 1$.
The sequence $(A_n^\prime)_{n\in\NN}$ defined by
\begin{equation*}
A_n^\prime\defeq\frac{(n+1)^p-(n-1)^p}{n^{p-1}}
\end{equation*}
is monotonically decreasing.
\end{Lem}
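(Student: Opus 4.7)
My plan is to interpolate the discrete sequence by a smooth function and then prove monotonicity by a derivative computation. Set $A_n' = f(n)$ with $f(x) := x^{1-p}\bigl((x+1)^p - (x-1)^p\bigr)$. Factoring $x^p$ out of the bracket yields the reciprocal-variable representation $f(x) = h(1/x)$, where
\begin{equation*}
h(u) := \frac{(1+u)^p - (1-u)^p}{u}, \qquad u \in (0,1].
\end{equation*}
Since $n \mapsto 1/n$ is strictly decreasing, the sequence $(A_n')$ is non-increasing if and only if $h$ is non-decreasing on $(0,1]$, and I would organize the proof around this equivalence.

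The next step is to compute $h'$ via the identity $u\,h(u) = (1+u)^p - (1-u)^p$; differentiating both sides gives
\begin{equation*}
u\,h'(u) = p\bigl[(1+u)^{p-1} + (1-u)^{p-1}\bigr] - h(u).
\end{equation*}
The bracket on the right is exactly the derivative of $s \mapsto (1+s)^p - (1-s)^p$, so the fundamental theorem of calculus recasts $h(u) = \tfrac{1}{u}\int_0^u p\,g(s)\,\dd s$, where $g(s) := (1+s)^{p-1} + (1-s)^{p-1}$. Consequently, $h'(u) \geq 0$ is equivalent to
\begin{equation*}
g(u) \geq \frac{1}{u}\int_0^u g(s)\,\dd s,
\end{equation*}
i.e.\ the value of $g$ at $u$ dominates its average on $[0,u]$.

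It therefore suffices to verify that $g$ is non-decreasing on $[0,1)$. A direct computation gives $g'(s) = (p-1)\bigl[(1+s)^{p-2} - (1-s)^{p-2}\bigr]$. Both factors are non-positive: the prefactor because $p \leq 1$, and the bracket because $t \mapsto t^{p-2}$ is decreasing on $(0,\infty)$ while $1+s \geq 1-s$ for $s \in [0,1)$. Hence $g' \geq 0$.

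I expect the only non-routine issue to be the behavior at the endpoint $u = 1$: when $p < 1$ the function $g$ has an integrable singularity at $s = 1$, but $h$ nevertheless extends continuously to $u = 1$ with value $2^p$, so monotonicity of $h$ on $(0,1)$ upgrades to monotonicity on $(0,1]$ by continuity. Everything else is essentially mechanical once the reciprocal-variable substitution is in place.
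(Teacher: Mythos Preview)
Your argument is correct and takes a genuinely different route from the paper. The paper works directly with $f_1(x)=x^{1-p}\bigl((x+1)^p-(x-1)^p\bigr)$, expands $(x\pm 1)^p$ in a Taylor series about $x$, verifies convergence via explicit remainder estimates, and obtains $f_1(x)=2p+\sum_{n\geq 2}h_n x^{-(2n-2)}$ with all $h_n>0$; this only converges for $x\geq 2$, so the paper must then handle $A_1'\geq A_2'$ separately by an ad hoc comparison of $2^{2p-1}$ and $3^p-1$. Your reciprocal substitution $u=1/x$ collapses all of this: the identity $h(u)=\tfrac{p}{u}\int_0^u g(s)\,\dd s$ together with the one-line check that $g(s)=(1+s)^{p-1}+(1-s)^{p-1}$ is non-decreasing yields monotonicity of $h$ on the full interval $(0,1]$ in one stroke, with no series, no remainder bounds, and no special treatment of $n=1$. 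One small quibble: the ``if and only if'' in your second paragraph is an overstatement (knowing $h(1/n)$ is non-increasing does not force $h$ to be non-decreasing at irrational $u$), but you only use the ``if'' direction, so the proof is unaffected.
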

\begin{proof}
For $p=1$ the claim is trivial.
Otherwise, consider the functions $f_1,g_1:(0,\infty)\to\RR$ defined by $g_1(x)=x^p$ and
\begin{equation*}
f_1(x)=\frac{g_1(x+1)-g_1(x-1)}{x^{p-1}}.
\end{equation*}
Note that $f_1(n)=A_n^\prime$.
We will show that $f_1$ is monotonically decreasing on $[2,\infty)$ and that $A_1^\prime>A_2^\prime$.
The Taylor expansion of $g_1$ at $x\geq 2$ is given by
\begin{equation*}
g_1(x+h)=\sum_{k=0}^n \frac{h^k}{k!}x^{p-k} \prod_{m=0}^{k-1}(p-m)+R(x,h,n),
\end{equation*}
and the corresponding approximation error is
\begin{equation*}
R(x,h,n)\defeq \int_x^{x+h}\frac{(x+h-t)^n}{n!}t^{p-n-1}\prod_{m=0}^n(p-m)\dd t.
\end{equation*}
Next, note that
\begin{align*}
\abs{R(x,1,n)}&\leq\int_x^{x+1}\frac{\abs{x+1-t}^n}{n!}t^{p-n-1}\prod_{m=0}^n\abs{p-m}\dd t\\
&\leq\int_x^{x+1}t^{p-n-1}\dd t=\frac{1}{p-n}((x+1)^{p-n}-x^{p-n})\leq\frac{1}{n-p}\\
\intertext{and}
\abs{R(x,-1,n)}&\leq\int_{x-1}^x\frac{(x-1-t)^n}{n!}t^{p-n-1}\prod_{m=0}^n(p-m)\dd t\\
&\leq\int_{x-1}^x t^{p-n-1}\dd t=\frac{1}{p-n}(x^{p-n}-(x-1)^{p-n})\leq\frac{1}{n-p}.\\
\end{align*}
This follows from $\frac{1}{n!}\prod_{m=0}^n\abs{p-m}=p\prod_{m=1}^n\frac{m-p}{m}\leq 1$ and $\abs{x+h+t}^n \leq 1$ for $h\in\setn{-1,1}$ and $t$ between $x$ and $x+h$.
As $\lim_{n\to\infty}\frac{-1}{p-n}=0$, we know that $\lim_{n\to\infty}R(x,h,n)=0$ for $h\in\setn{-1,1}$, and we may write
\begin{align*}
g_1(x+1)&=\sum_{k=0}^\infty\frac{1}{k!}x^{p-k} \prod_{m=0}^{k-1}(p-m)\\
\intertext{and}
-g_1(x-1)&=\sum_{k=0}^\infty\frac{(-1)^{k+1}}{k!}x^{p-k} \prod_{m=0}^{k-1}(p-m).
\end{align*}
Setting $h_n\defeq \frac{1}{(2n-1)!}\prod_{m=0}^{2n-2}(p-m)$ for $n\in\NN$ and noticing $h_n>0$, this gives
\begin{align*}
g_1(x+1)-g_1(x-1)&=2\sum_{n=1}^\infty\frac{1}{(2n-1)!}x^{p-(2n-1)} \prod_{m=0}^{2n-2}(p-m)\\
&=2px^{p-1}+2\sum_{n=2}^\infty\frac{1}{(2n-1)!}x^{p-(2n-1)} \prod_{m=0}^{2n-2}(p-m)\\
&=2px^{p-1}+2\sum_{n=2}^\infty h_n x^{p-(2n-1)}.
\end{align*}
As a function of $x$, the expression
\begin{equation*}
f_1(x)=\frac{g_1(x+1)-g_1(x-1)}{x^{p-1}}=2p+\sum_{n=2}^\infty\frac{h_n}{x^{2n-2}}
\end{equation*}
is thus monotonically decreasing on $[2,\infty)$.
In order to show that
\begin{equation}
A_1^\prime=2^p\geq \frac{3^p-1}{2^{p-1}}=A_2^\prime \label{eq:monotonicity-start}
\end{equation}
for all $p\in (0,1]$, consider the functions $f_2,g_2:\RR\to\RR$ defined by $f_2(x)=3^x-1$ and $g_2(x)=2^{2x-1}$.
Then $f_2^\prime(x)=3^x\ln(3)$, $g_2^\prime(x)=4^x\ln(2)$, $f(1)=g(1)$, and $f_2,g_2,f_2^\prime$, and $g_2^\prime$ are monotonically increasing.
Therefore $f_1^\prime(1)>g_2^\prime(1)$ shows that $f(x)\leq g(x)$ for all $x\leq 1$ (with equality only for $x=1$).
This implies \eqref{eq:monotonicity-start}.
\end{proof}
In \cref{result:improved-right-stechkin-q}, we will show how \cref{result:upper-bound-q,result:monotonicity} and an in some sense optimal choice of $p$ in $b_n=(n(n+1))^p$ yield $C_1(2)\leq \sqrt{\frac{\ee\ln(2)}{\sqrt{2}}}$ which is already an improvement to Stechkin's $C_1(2)\leq\frac{2}{\sqrt{3}}$.
More detailed analysis for a specific choice of $p$ in this construction yields further improvement when $q=2$, coinciding with de Bruijn's result \cite[p.~174]{DeBruijn1958} in the first two decimal places.
\begin{Kor}\label{result:right-stechkin-upper-bound}
The minimal constant $C_1(2)>0$ for which
\begin{equation*}
\sum_{n=1}^\infty a_n\leq C_1(2)\sum_{n=1}^\infty\lr{\frac{1}{n}\sum_{k=n}^\infty a_k^2}^{\frac{1}{2}}
\end{equation*}
holds for all $(a_n)_{n\in\NN}\in\ell_2$ is at most $1.1086983$.
\end{Kor}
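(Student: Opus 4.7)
The plan is to revisit the construction $b_n = (n(n+1))^p$ from \cref{result:right-stechkin-q,result:upper-bound-q,result:monotonicity} with $q = 2$ (so that $q^\prime = 2$), but to carry out a sharper analysis of the resulting sequence
\[
A_n = n\bigl((n(n+1))^p - ((n-1)n)^p\bigr)^2 \sum_{k=n}^\infty \frac{1}{(k(k+1))^{2p}}
\]
for a specific value $p^\ast \in (1/2,1)$ rather than the value that minimises the crude envelope $(A_n^\prime)^2/(4p-1)$ used in \cref{result:improved-right-stechkin-q}. By \cref{result:right-stechkin-q}, proving the corollary then reduces to exhibiting a $p^\ast$ for which $A_n \leq (1.1086983)^2$ for every $n\in\NN$.

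My plan is to split the verification at some threshold $N$. For $n \geq N$, \cref{result:upper-bound-q,result:monotonicity} still give $A_n \leq (A_n^\prime)^2/(4p-1)$, and the Taylor expansion $(n\pm 1)^p = n^p \pm p n^{p-1} + O(n^{p-2})$ yields $A_n^\prime = 2p + O(n^{-2})$, so the tail values $A_n$ converge to the asymptotic $4p^2/(4p-1)$; choosing $N$ large enough makes the tail uniformly close to this limit. For the finitely many indices $n < N$, I would evaluate $A_n$ to the required precision by approximating the infinite sum by a partial sum $\sum_{k=n}^M (k(k+1))^{-2p}$ and absorbing the remainder via the integral comparison $\sum_{k>M}(k(k+1))^{-2p}\leq M^{1-4p}/(4p-1)$ from the proof of \cref{result:upper-bound-q}.

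The main obstacle is the calibration of $p^\ast$. On the one hand, $A_1 = 2^{2p}\sum_{k\geq 1}(k(k+1))^{-2p}$ shrinks monotonically as $p$ grows and tends to dominate the small-$n$ contributions; on the other hand, the asymptotic value $4p^2/(4p-1)$ is minimised at $p = 1/2$ and grows with $p$ thereafter. Balancing these opposing pressures via a one-dimensional optimisation pins down a specific $p^\ast$ (numerically near $0.88$) at which $A_1$ and $\lim_{n\to\infty}A_n$ meet just below $(1.1086983)^2$; the intermediate values $A_n$ for $1 < n < N$ are then dispatched by direct numerical computation. The tightness of the resulting bound, coinciding with de~Bruijn's value $1.1064957714$ in its first two decimal places, indicates that within the family $b_n = (n(n+1))^p$ this argument is essentially saturated.
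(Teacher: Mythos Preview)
Your proposal is correct and follows essentially the same route as the paper: split at a threshold $N$, bound the tail $n\geq N$ via \cref{result:upper-bound-q,result:monotonicity} (the paper uses the exact value $(A_N')^2/(4p-1)$ rather than the asymptotic, but these agree to the required precision for $N$ large), compute the finitely many $A_n$ for $n<N$ with the same integral-comparison truncation error, and take $p=0.88$. The paper's concrete choices are $N=100$ and $M=2\cdot 10^5$, matching your outline exactly.
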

\begin{proof}
For $N\in\NN$ with $N\geq 2$, \cref{result:upper-bound-q,result:monotonicity} give 
\begin{align}
C(p,2)&\leq \max\setn{\max\limits_{n=1,\ldots,N-1}\sqrt{A_n},\sup\limits_{n\geq N}\frac{1}{\sqrt{4p-1}}\frac{(n+1)^p-(n-1)^p}{n^{p-1}}}\nonumber\\
&=\max\setn{\max\limits_{n=1,\ldots,N-1}\sqrt{A_n},\frac{1}{\sqrt{4p-1}}\frac{(N+1)^p-(N-1)^p}{N^{p-1}}}.\label{eq:improved-constant}
\end{align}
The last expression evaluated at $N=100$ and $p=0.88$ can be bounded above by $1.1086983$.
For the computation of $A_1,\ldots,A_{100}$, the series $\sum_{k=n}^\infty\frac{1}{(k(k+1))^{2p}}$ have been truncated to $\sum_{k=n}^M \frac{1}{(k(k+1))^{2p}}$ with $M=2\cdot 10^5$.
The proof of \cref{result:upper-bound-q} then shows $\sum_{k=M}^\infty\frac{1}{(k(k+1))^{2p}}\leq \frac{1}{4p-1}M^{-4p+1}$.
For $n\in\NN$, we also have
\begin{equation*}
n((n(n+1))^p-(n(n-1))^p)^2=n^{4p-1}\lr{\frac{(n+1)^p-(n-1)^p}{n^{p-1}}}^2\leq n^{4p-1}
\end{equation*}
by \cref{result:monotonicity}.
The truncation error is therefore at most $\frac{1}{4p-1}N^{4p-1}M^{-4p+1}\approx 1.9055\cdot 10^{-9}$, which can be neglected in the computation of the maximum in \eqref{eq:improved-constant}.
\end{proof}

For $p=1$, \cref{result:upper-bound-q} gives $A_n^{\frac{1}{q^\prime}}\leq 2(2q^\prime-1)^{-\frac{1}{q^\prime}}$, and \cref{result:right-stechkin-q} yields
\begin{equation}
\sum_{n=1}^\infty a_n\leq 2(2q^\prime-1)^{-\frac{1}{q^\prime}}\sum_{n=1}^\infty\lr{\frac{1}{n}\sum_{k=n}^\infty a_k^q}^{\frac{1}{q}}\label{eq:stechkin-choice-bound-q}
\end{equation}
This extends the bound obtained by Levin and Stechkin \cite[{\rus{Д}}.61]{HardyLiPo1948} for $\frac{5}{3}\leq q< 3$ to arbitrary $1<q<\infty$.
The $q=2$ case $C_1(2)\leq \frac{2}{\sqrt{3}}$ has been addressed again in \cite[\rus{Лемма}~3]{Stechkin1951}.
For $1<q<\frac{2+\ln(2)}{2-\ln(2)}$, we can achieve better bounds by choosing the parameter $p$ optimally in $b_k=(k(k+1))^p$.
\begin{Satz}\label{result:improved-right-stechkin-q}
Let $1<q\leq\frac{2+\ln(2)}{2-\ln(2)}$.
The minimal constant $C_1(q)>0$ for which
\begin{equation*}
\sum_{n=1}^\infty a_n\leq C_1(q)\sum_{n=1}^\infty\lr{\frac{1}{n}\sum_{k=n}^\infty a_k^q}^{\frac{1}{q}}
\end{equation*}
holds for all $(a_n)_{n\in\NN}\in\ell_q$ is at most $\lr{\frac{\ee\ln(2)}{\sqrt{2}}}^{\frac{1}{q^\prime}}$.
\end{Satz}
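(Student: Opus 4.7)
The plan is to apply \cref{result:right-stechkin-q} with the auxiliary sequence $b_k=(k(k+1))^p$ introduced above and to optimize the free parameter $p\in(\frac{1}{2q^\prime},1]$. Combining \cref{result:upper-bound-q} and \cref{result:monotonicity} gives, for every admissible $p$,
\[
C(p,q)\leq \frac{2^p}{(2q^\prime p-1)^{1/q^\prime}}\eqdef F(p,q),
\]
because \cref{result:upper-bound-q} factors $A_n^{1/q^\prime}$ as $(2q^\prime p-1)^{-1/q^\prime}\cdot A_n^\prime$, and \cref{result:monotonicity} identifies $\sup_{n\in\NN}A_n^\prime=A_1^\prime=2^p$. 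The remaining task is thus to minimize the one-variable function $F(\cdot,q)$ on $(\frac{1}{2q^\prime},1]$.

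To locate the minimizer, I take the logarithmic derivative
\[
\frac{\p}{\p p}\ln F(p,q)=\ln 2-\frac{2}{2q^\prime p-1},
\]
whose unique zero $p^*=\frac{2+\ln 2}{2q^\prime\ln 2}$ is the global minimum by convexity of $p\mapsto\ln F(p,q)$. It is immediate that $p^*>\frac{1}{2q^\prime}$ for every $q>1$, whereas the condition $p^*\leq 1$ is equivalent to $q^\prime\geq\frac{2+\ln 2}{2\ln 2}$, which via $q^\prime=\frac{q}{q-1}$ translates precisely into the hypothesis $1<q\leq\frac{2+\ln 2}{2-\ln 2}$. Thus the unconstrained minimizer $p^*$ is admissible exactly on the stated range of $q$.

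Substituting $p=p^*$ is routine: $2q^\prime p^*-1=\frac{2}{\ln 2}$ and $p^*\ln 2=\frac{2+\ln 2}{2q^\prime}$ give $2^{p^*}=(2\ee^2)^{1/(2q^\prime)}$, and collecting powers simplifies $F(p^*,q)$ to $\lr{\frac{\ee\ln 2}{\sqrt{2}}}^{1/q^\prime}$, which is the asserted bound in view of $\frac{1}{q^\prime}=1-\frac{1}{q}$.

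The main obstacle, insofar as there is one, is not the one-dimensional optimization itself but verifying the interplay of the two admissibility constraints inherited from the auxiliary lemmas: $p>\frac{1}{2q^\prime}$ is forced by \cref{result:upper-bound-q} and $p\leq 1$ by \cref{result:monotonicity}. Together with the location of the unconstrained minimum, they conspire to produce precisely the threshold $\frac{2+\ln 2}{2-\ln 2}$ appearing in the statement; above it the unconstrained minimizer escapes the admissible interval, and this particular family of $b_k$ ceases to yield the sharpest bound obtainable from \cref{result:right-stechkin-q}.
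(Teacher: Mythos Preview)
Your proof is correct and follows essentially the same approach as the paper: apply \cref{result:right-stechkin-q} with $b_k=(k(k+1))^p$, combine \cref{result:upper-bound-q} and \cref{result:monotonicity} to bound $C(p,q)$ by $2^p(2q^\prime p-1)^{-1/q^\prime}$, and optimize in $p$. The only cosmetic difference is that the paper substitutes $\lambda=q^\prime p$ to rewrite the bound as $\bigl(\tfrac{2^\lambda}{2\lambda-1}\bigr)^{1/q^\prime}$ before minimizing, whereas you differentiate directly in $p$; both routes lead to the same minimizer and the same admissibility check $p^\ast\leq 1\Leftrightarrow q\leq\tfrac{2+\ln 2}{2-\ln 2}$.
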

\begin{proof}
Choose $p\in (\frac{1}{2q^\prime},1]$ and set $b_k=(k(k+1))^p$ in \cref{result:right-stechkin-q}.
Then \cref{result:upper-bound-q,result:monotonicity} show that
\begin{equation*}
\sum_{n=1}^\infty a_n\leq \frac{2^p}{(2q^\prime p-1)^{\frac{1}{q^\prime}}}\sum_{n=1}^\infty\lr{\frac{1}{n}\sum_{k=n}^\infty a_k^q}^{\frac{1}{q}}
\end{equation*}
for all $(a_n)_{n\in\NN}\in\ell_q$.
For fixed $q\in (1,\frac{2+\ln(2)}{2-\ln(2)}]$, we find a minimizer of $p\mapsto\frac{2^p}{(2q^\prime p-1)^{\frac{1}{q^\prime}}}$.
Through the change of variables $\lambda\defeq q^\prime p$, this expression becomes $\lr{\frac{2^\lambda}{2\lambda-1}}^{\frac{1}{q^\prime}}$.
The latter is minimized at $\lambda=\frac{2+\ln(2)}{\ln(4)}$, so $\frac{1}{2q^\prime}<p=\frac{\lambda}{q^\prime}\leq 1$, and the minimum is $\frac{\ee\ln(2)}{\sqrt{2}}$.
\end{proof}

The conclude this section by comparing the bounds from \eqref{eq:stechkin-choice-bound-q} and \cref{result:improved-right-stechkin-q} to those from the literature.

Copson \cite[Theorem~2.3]{Copson1928} shows that $C_1(q)\leq q^{\frac{1}{q}}$.
This result is also reported by Hardy, Littlewood, and Pólya \cite[Theorem~345]{HardyLiPo1934}.
The bound from \eqref{eq:stechkin-choice-bound-q} improves the one from \cite[Theorem~2.3]{Copson1928} when $2(2q^\prime-1)^{-\frac{1}{q^\prime}}<q^{\frac{1}{q}}$.
As
\begin{align*}
\lim_{q\to 1}q^{\frac{1}{q}}=\lim_{q\to\infty}q^{\frac{1}{q}}&=1,\\
\lim_{q\to 1}2(2q^\prime-1)^{-\frac{1}{q^\prime}}=\lim_{q\to\infty}2(2q^\prime-1)^{-\frac{1}{q^\prime}}&=2,
\end{align*}
and 
$2(2q^\prime-1)^{-\frac{1}{q^\prime}}<q^{\frac{1}{q}}$ at $q=2$, there are real numbers $q_1$ and $q_2$ such that $1<q_1<q_2$ and $2(2q^\prime-1)^{-\frac{1}{q^\prime}}<q^{\frac{1}{q}}$ for all $q\in(q_1,q_2)$.
Furthermore, the function $q\mapsto q^{\frac{1}{q}}$ is monotonically increasing on $(1,\ee)$ and monotonically decreasing on $(\ee,\infty)$.
Also, for $q_3\approx 1.7718$, the function $q\mapsto 2(2q^\prime-1)^{-\frac{1}{q^\prime}}$ is monotonically decreasing on $(1,q_3)$ and monotonically increasing on $(q_3,\infty)$.
Thus for $q$ sufficiently close to $1$ or $\infty$, the bound from \cite[Theorem~2.3]{Copson1928} is smaller than the one from \eqref{eq:stechkin-choice-bound-q}.
It turns out that $q_1$ and $q_2$ can be chosen such that $2(2q^\prime-1)^{-\frac{1}{q^\prime}}<q^{\frac{1}{q}}$ \emph{if and only if} $q\in(q_1,q_2)$.
Analytical expressions for $q_1$ and $q_2$ are not available through the inequality $2(2q^\prime-1)^{-\frac{1}{q^\prime}}<q^{\frac{1}{q}}$.
However, we have $q_1\approx 1.3229$ and $q_2\approx 4.4124$.

An improvement to \cite[Theorem~2.3]{Copson1928} is reported by Levin and Stechkin in their appendix to the Russian 1948 edition \cite{HardyLiPo1948} of Hardy, Littlewood, and Pólya's monograph.
Levin and Stechkin's bound \cite[{\rus{Д}}.61]{HardyLiPo1948} translates to our notation as
\begin{equation}
C_1(q)\begin{cases}\leq 2^{\frac{1}{q}-2}\lr{3-\frac{1}{q}}q(2-\frac{1}{q})^{\frac{1}{q}-1},&1<q<\frac{5}{3},\\
\leq 2\lr{2\frac{q}{q-1}-1}^{-\frac{q-1}{q}},&\frac{5}{3}\leq q<3,\\
=(q-1)^\frac{1}{q},&3\leq q<\infty.\end{cases}\label{eq:levin-bound}
\end{equation}
Note that at first glance, \eqref{eq:levin-bound} is not what is stated in \cite[{\rus{Д}}.61]{HardyLiPo1948} or its English translation \cite[D.61]{LevinSt1960}.
The mismatch is $2^{\frac{1}{q}-2}\lr{3-\frac{1}{q}}q(2-\frac{1}{q})^{\frac{1}{q}-1}$ versus $2^{\frac{1}{q}-2}\lr{3-\frac{1}{q}}q(1-\frac{1}{q})^{\frac{1}{q}-1}$ in the $1<q<\frac{5}{3}$ case but the latter would not be an improvement over \cite[Theorem~345]{HardyLiPo1934}, and it would not fit in as a special case of the two-parameter inequality \cite[{\rus{Д}}.62]{HardyLiPo1948} or \cite[D.62(v)]{LevinSt1960}, regardless of the discrepancies between the formulas in the different versions and regardless of the fact that it is not the $r=-p$ special case as claimed by Levin and Stechkin but actually the $r=+p$ one.

The bound from \eqref{eq:stechkin-choice-bound-q} is larger than the one from \cite[{\rus{Д}}.61]{HardyLiPo1948} when $3<q<\infty$.
(In this case the latter result provides the optimal constant.)
The bounds coincide for $\frac{5}{3}\leq q\leq 3$.
In the remaining case $1<q<\frac{5}{3}$, our bound from \eqref{eq:stechkin-choice-bound-q} is monotonically decreasing in $q$ while the Levin and Stechkin's bound \cite[{\rus{Д}}.61]{HardyLiPo1948} is monotonically increasing, reversing orders at the boundaries.
Thus the bounds coincide at most once for $1<q<\frac{5}{3}$, and they do for $q_4\approx 1.3725$, which means that the bound from \eqref{eq:stechkin-choice-bound-q} is smaller for $q_4<q\leq\frac{5}{3}$ and larger for $1<q<q_4$ than the one from \cite[{\rus{Д}}.61]{HardyLiPo1948}.

By construction, the bound from \cref{result:improved-right-stechkin-q} outperforms the one from \eqref{eq:stechkin-choice-bound-q}.
Moreover, \cref{result:improved-right-stechkin-q} is an improvement over \cite[{\rus{Д}}.61]{HardyLiPo1948} for $1<q\leq \frac{2+\ln(2)}{2-\ln(2)}\approx 2.0608$.

Gao \cite[Theorem~1]{Gao2011} provides further improvement by showing that $C_1(q)=(q-1)^{\frac{1}{q}}$ not only for $3\leq q<\infty$ as stated in \cite[{\rus{Д}}.61]{HardyLiPo1948}, but even for $q_0\leq q<\infty$, where $q_0\approx 2.8855$ is a solution of the equation
\begin{equation*}
2^{\frac{1}{q-1}}\lr{(q-1)^{\frac{q}{q-1}}-(q-1)}-\lr{1+\frac{3-q}{2}}^{\frac{q}{q-1}}=0.
\end{equation*}

De Bruijn \cite[p.~174]{DeBruijn1958} reports that $C_1(2)=1.1064957714$ \enquote{with an error of at most $9$ units at the last decimal place.}
This is an improvement over Levin and Stechkin's bound $C_1(2)\leq\frac{2}{\sqrt{3}}\approx 1.1547$ established in \cite[{\rus{Д}}.61]{HardyLiPo1948}, and over our bound $C_1(2)\leq \sqrt{\frac{\ee\ln(2)}{\sqrt{2}}}\approx 1.1542$ from \cref{result:improved-right-stechkin-q}.
A visualization of the various upper bounds on $C_1(q)$ is given in \cref{fig:right-stechkin-q}.

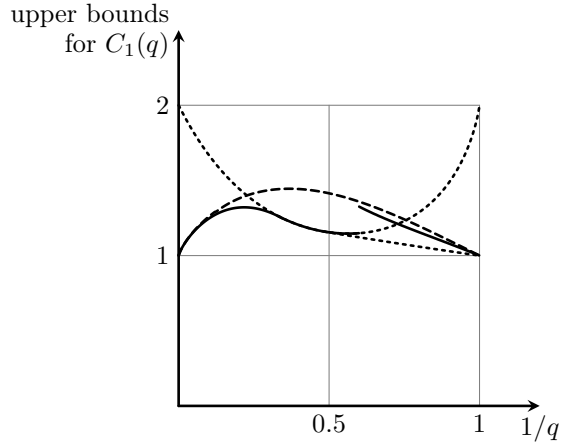
\begin{figure}[h!]
\begin{center}
\begin{tikzpicture}[line cap=round,line join=round,>=stealth,x=4.0cm,y=2.0cm]
\draw[black!50] (0,1)--(1,1);
\draw[black!50] (0,2)--(1,2);
\draw[black!50] (1,0)--(1,2);
\draw[black!50] (0.5,0)--(0.5,2);
\draw[line width=1pt,->] (0,0)--(1.2,0);
\draw[line width=1pt,->] (0,0)--(0,2.5);
\draw (1.2,0) node[below]{$1/q$};
\draw (0,2.6) node[left]{upper bounds};
\draw (0,2.4) node[left]{for $C_1(q)$};
\draw (0,1) node[left]{$1$};
\draw (0,2) node[left]{$2$};
\draw (1,0) node[below]{$1$};
\draw (0.5,0) node[below]{$0.5$};
\draw[densely dashed,line width=1pt,smooth,samples=200,domain=1.8713492058682709E-6:1] plot(\x,{(\x)^(-(\x))});
\draw[dotted,line width=1pt,smooth,samples=200,domain=0:0.9999982192917657] plot(\x,{2*((-(\x)-1)/((\x)-1))^((\x)-1)});
\draw[dotted,line width=1pt,smooth,samples=200,domain=0.4852511696625:1] plot(\x,{(2.718281828459045*ln(2)/sqrt(2))^(1-(\x))});

\draw[line width=1pt,smooth,samples=100,domain=0.0001:0.346552] plot(\x,{((1.0-(\x))/(\x))^((\x))});
\draw[line width=1pt,smooth,samples=200,domain=1/3:0.6] plot(\x,{2*((-(\x)-1)/((\x)-1))^((\x)-1)});
\draw[line width=1pt,smooth,samples=100,domain=0.6:1] plot(\x,{2.0^(1.0/(1.0/(\x))-2.0)*(3.0-1.0/(1.0/(\x)))*1.0/(\x)*(2.0-1.0/(1.0/(\x)))^(1.0/(1.0/(\x))-1.0)});
\end{tikzpicture}
\end{center}
\caption{The upper bounds on $C_1(q)$ given by Copson \cite[Theorem~2.3]{Copson1928} (dashed line), Levin and Stechkin \cite[{\rus{Д}}.61]{HardyLiPo1948} and Gao \cite[Theorem~1]{Gao2011} (solid line), and \eqref{eq:stechkin-choice-bound-q} and \cref{result:improved-right-stechkin-q} from the paper at hand (dotted line).\label{fig:right-stechkin-q}}
\end{figure}

\section{The weak discrete Stechkin inequality}\label{chap:weak}
Here we compute the optimal constants $c_{1,\infty}(q)$ and $C_{1,\infty}(q)>0$, for which the inequality
\begin{equation}
\frac{1}{c_{1,\infty}(q)}\sup_{n\in\NN}n\lr{\frac{1}{n}\sum_{k=n}^\infty a_k^q}^{\frac{1}{q}}\leq\sup_{n\in\NN} na_n\leq C_{1,\infty}(q)\sup_{n\in\NN}n\lr{\frac{1}{n}\sum_{k=n}^\infty a_k^q}^{\frac{1}{q}}\label{eq:general-weak-l1-q}
\end{equation}
holds true for all sequences $(a_n)_{n\in\NN}$ with $a_1\geq a_2\geq\ldots\geq 0$, when $1<q<\infty$.
With the modification indicated in \cref{chap:intro}, inequality \eqref{eq:general-weak-l1-q} holds true also for $q=\infty$.
The monotonicity assumption on $(a_n)_{n\in\NN}$ gives $\sup\setcond{a_k}{k\geq n}=a_n$, so $c_{1,\infty}(\infty)=C_{1,\infty}(\infty)=1$.
For $q=1$, we have $C_{1,\infty}(1)=1$ because
\begin{equation*}
\sup_{n\in\NN} na_n = \sup_{n\in\NN}\sum_{k=1}^n a_n\leq\sum_{k=1}^\infty a_k=\sup_{n\in\NN}\sum_{k=n}^\infty a_k,
\end{equation*}
which holds as an equality when $a_n=0$ for all $n\geq 2$.

Notable results in the direction of \eqref{eq:general-weak-l1-q} are the inequalities 
\begin{equation*}
n^{1-1/q}\lr{\sum_{k=n}^\infty a_k^q}^{1/q}\leq\sum_{n=1}^\infty a_n
\end{equation*}
and
\begin{equation*}
n^{1-1/q}\lr{\sum_{k=n+1}^\infty a_k^q}^{1/q}\leq (q-1)^{-1/q}\sup_{n\in\NN}n a_n
\end{equation*}
proved in \cite[{\rus{Лемма}~IV.2.1}]{Temlyakov1986} and \cite[Proposition~2.11]{FoucartRa2013}, respectively.

The following estimate on the Riemann zeta-function is required in our calculation of $c_{1,\infty}(q)$.
\begin{Lem}\label{result:zeta-estimate}
Let $1<q<\infty$.
Then $\zeta(q)>\frac{1}{q-1}+\frac{1}{2}$.
\end{Lem}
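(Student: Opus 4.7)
The plan is to compare $\zeta(q) = \sum_{n=1}^\infty n^{-q}$ with the integral $\int_1^\infty x^{-q}\,\dd x = \frac{1}{q-1}$ via the trapezoidal rule. The key observation is that $f(x) = x^{-q}$ is strictly convex on $(0,\infty)$ for $q > 1$ (indeed, $f''(x) = q(q+1)x^{-q-2} > 0$), so for every $n \in \NN$ the trapezoidal estimate
\begin{equation*}
\int_n^{n+1} \frac{\dd x}{x^q} < \frac{1}{2}\lr{\frac{1}{n^q} + \frac{1}{(n+1)^q}}
\end{equation*}
is strict.

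Summing this inequality over $n \in \NN$, the left-hand side telescopes to $\int_1^\infty x^{-q}\,\dd x = \frac{1}{q-1}$. On the right-hand side, after reindexing, one obtains
\begin{equation*}
\sum_{n=1}^\infty \frac{1}{2}\lr{\frac{1}{n^q} + \frac{1}{(n+1)^q}} = \frac{1}{2}\zeta(q) + \frac{1}{2}(\zeta(q) - 1) = \zeta(q) - \frac{1}{2}.
\end{equation*}
Combining the two yields $\frac{1}{q-1} < \zeta(q) - \frac{1}{2}$, which is exactly the claim. The strictness in the summed inequality is ensured because each term-wise estimate is strict and all series involved converge absolutely for $q > 1$.

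The only mild subtlety is justifying the strict inequality under infinite summation; this is settled by noting that even the $n=1$ term alone produces a positive gap, say $\frac{1}{2}(1 + 2^{-q}) - \int_1^2 x^{-q}\,\dd x > 0$, so the resulting inequality on the partial sums is strict by a fixed positive margin and passes to the limit. I do not anticipate any real obstacle here; the argument is elementary and uses only convexity of $x^{-q}$.
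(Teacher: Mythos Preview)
The proposal is correct and follows essentially the same route as the paper: both compare $\int_1^\infty x^{-q}\,\dd x=\frac{1}{q-1}$ to the trapezoidal sum $\sum_{n\geq 1}\frac{1}{2}(n^{-q}+(n+1)^{-q})=\zeta(q)-\frac{1}{2}$ using convexity of $x\mapsto x^{-q}$. Your extra remark on why the strict inequality survives the passage to the limit is a welcome clarification, but otherwise the arguments coincide.
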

\begin{proof}
Consider
\begin{align*}
\zeta(q)-\frac{1}{q-1}&=\zeta(q)-\int_1^\infty x^{-q}\dd x>\zeta(q)-\sum_{n=1}^\infty \frac{n^{-q}+(n+1)^{-q}}{2}\\
&=\zeta(q)-\frac{2\zeta(q)-1}{2}=\frac{1}{2}.
\end{align*}
\end{proof}
The estimate in \cref{result:zeta-estimate} is not best possible.
In fact, the constant $\frac{1}{2}$ can be replaced by the Euler--Mascheroni constant, see \cite[(2.1.16)]{Titchmarsh1986}.
Nonetheless this estimate enables the computation of the precise constant on the left-hand side of \eqref{eq:general-weak-l1-q}.
\begin{Satz}\label{result:left-weak-q}
Let $1<q<\infty$.
The minimal constant $c_{1,\infty}(q)>0$ for which
\begin{equation*}
\sup_{n\in\NN}n\lr{\frac{1}{n}\sum_{k=n}^\infty a_k^q}^{\frac{1}{q}}\leq c_{1,\infty}(q)\sup_{n\in\NN} na_n
\end{equation*}
holds for all $(a_n)_{n\in\NN}$ with $a_1\geq a_2\geq \ldots\geq 0$ is $c_{1,\infty}(q)=\zeta(q)^{\frac{1}{q}}$.
\end{Satz}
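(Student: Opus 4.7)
The plan is to prove both inequalities separately and pinpoint the extremal sequence.

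\medskip

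\textbf{Lower bound.} I would test the sequence $a_k = 1/k$, which is monotonically decreasing with $\sup_{m\in\NN} ma_m = 1$. Evaluating the left-hand side at $n=1$ gives
\begin{equation*}
\sup_{n\in\NN}n\lr{\frac{1}{n}\sum_{k=n}^\infty a_k^q}^{\frac{1}{q}}\geq \lr{\sum_{k=1}^\infty \frac{1}{k^q}}^{\frac{1}{q}}=\zeta(q)^{\frac{1}{q}},
\end{equation*}
so $c_{1,\infty}(q)\geq\zeta(q)^{\frac{1}{q}}$.

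\medskip

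\textbf{Upper bound.} Set $M\defeq\sup_{m\in\NN} m a_m$. Since $M\geq k a_k$ for every $k\in\NN$, we have $a_k\leq M/k$ pointwise, hence
\begin{equation*}
n^{q-1}\sum_{k=n}^\infty a_k^q\leq M^q\cdot n^{q-1}\sum_{k=n}^\infty\frac{1}{k^q}\eqdef M^q\, f(n).
\end{equation*}
Thus the claim reduces to the purely numerical inequality $f(n)\leq\zeta(q)$ for every $n\in\NN$, with equality at $n=1$ realized by $a_k=1/k$.

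\medskip

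\textbf{The numerical inequality.} The case $n=1$ is trivial since $f(1)=\zeta(q)$. For $n\geq 2$, the strategy is to combine a simple integral bound with \cref{result:zeta-estimate}. Writing out the first summand and using that $x\mapsto x^{-q}$ is strictly decreasing, one obtains
\begin{equation*}
\sum_{k=n}^\infty\frac{1}{k^q}=\frac{1}{n^q}+\sum_{k=n+1}^\infty\frac{1}{k^q}<\frac{1}{n^q}+\int_n^\infty x^{-q}\dd x=\frac{1}{n^q}+\frac{1}{(q-1)n^{q-1}},
\end{equation*}
from which
\begin{equation*}
f(n)<\frac{1}{n}+\frac{1}{q-1}\leq\frac{1}{2}+\frac{1}{q-1}<\zeta(q)
\end{equation*}
for all $n\geq 2$, where the last step is exactly \cref{result:zeta-estimate}. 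This gives $f(n)\leq\zeta(q)=f(1)$ for every $n\in\NN$, and therefore $c_{1,\infty}(q)\leq\zeta(q)^{\frac{1}{q}}$.

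\medskip

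\textbf{Main obstacle.} The reduction to $f(n)\leq\zeta(q)$ is clean, but proving the latter is subtler than it looks: a direct attempt to show that $f$ is monotonically decreasing in $n$ leads, after elementary manipulations, to an inequality of the form $u^{q-1}\leq 1+(q-1)(u-1)$ with $u=1+1/n$, which fails precisely when $q>2$. The key insight that avoids this difficulty is to sacrifice the sharp monotone comparison for $n\geq 2$ and instead exploit the gap between $\zeta(q)$ and $1/(q-1)$ quantified by \cref{result:zeta-estimate}. The integral bound is crude enough to apply uniformly in $q$, yet tight enough (after isolating the leading term $1/n^q$) to beat $\zeta(q)$ once $n\geq 2$.
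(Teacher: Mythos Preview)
Your proof is correct and follows essentially the same approach as the paper: reduce via $a_k\leq M/k$ to the numerical inequality $n^{q-1}\sum_{k\geq n}k^{-q}\leq\zeta(q)$, then combine the integral tail bound $\sum_{k\geq n}k^{-q}\leq n^{-q}+n^{1-q}/(q-1)$ with \cref{result:zeta-estimate} to handle $n\geq 2$. The only cosmetic difference is that you make the split between $n=1$ and $n\geq 2$ explicit, whereas the paper leaves it implicit.
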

\begin{proof}
The supremum
\begin{equation}
\sup\setcond{\sup_{n\in\NN}n^{1-\frac{1}{q}}\lr{\sum_{k=n}^\infty a_k^q}^{\frac{1}{q}}}{a_1\geq a_2\geq\ldots\geq 0,\sup_{n\in\NN}na_n=1}\label{eq:left-weak-q}
\end{equation}
is attained at the sequence $(a_n)_{n\in\NN}$ defined by $a_n=\frac{1}{n}$ for all $n\in\NN$.
Indeed, for any sequence $(a_n)_{n\in\NN}$ with $a_1\geq a_2\geq\ldots\geq 0$ and $\sup_{n\in\NN}na_n=1$, we have $0\leq a_n\leq \frac{1}{n}$ for all $n\in\NN$.
Therefore, $\lr{\sum_{k=n}^\infty a_k^q}^{\frac{1}{q}}\leq \lr{\sum_{k=n}^\infty\frac{1}{k^q}}^{\frac{1}{q}}$ for all $n\in\NN$ with equality if and only if $a_n=\frac{1}{n}$ for all $n\in\NN$.
Also, we have $\sup_{n\in\NN} n\frac{1}{n}=1$, and \eqref{eq:left-weak-q} evaluates to $\sup_{n\in\NN}n^{1-\frac{1}{q}}\lr{\sum_{k=n}^\infty\frac{1}{k^q}}^{\frac{1}{q}}=\zeta(q)^\frac{1}{q}$.
Note that the supremum $\sup_{n\in\NN}n^{1-\frac{1}{q}}\lr{\sum_{k=n}^\infty\frac{1}{k^q}}^{\frac{1}{q}}$ is attained at $n=1$ because
\begin{equation*}
\sum_{k=n}^\infty k^{-p}=n^{-p}+\sum_{k=n+1}^\infty k^{-p}\leq n^{-p}+\int_n^\infty x^{-p} \dd x=n^{-p}+\frac{n^{-p+1}}{p-1}
\end{equation*}
and \cref{result:zeta-estimate} give
\begin{equation*}
n^{1-\frac{1}{q}}\lr{\sum_{k=n}^\infty k^{-q}}^{\frac{1}{q}}\leq n^{1-\frac{1}{q}}\lr{n^{-q}+\frac{n^{-q+1}}{q-1}}^{\frac{1}{q}}=\lr{n^{-1}+\frac{1}{q-1}}^{\frac{1}{q}}\leq\zeta(q)^{\frac{1}{q}}.
\end{equation*}
\end{proof}

The result of \cref{result:left-weak-q} is depicted in \cref{fig:left-weak-q}.
\begin{figure}[H]
\begin{center}
\begin{tikzpicture}[line cap=round,line join=round,>=stealth,x=4cm,y=0.4cm]
\draw[black!50] (0,1)--(1,1);
\draw[black!50] (0,3)--(1,3);
\draw[black!50] (0,5)--(1,5);
\draw[black!50] (0,7)--(1,7);
\draw[black!50] (1,0)--(1,7);
\draw[black!50] (0.5,0)--(0.5,7);
\draw[line width=1pt,->] (0,0)--(1.2,0);
\draw[line width=1pt,->] (0,0)--(0,8.5);
\draw (0,8.5) node[left]{$c_{1,\infty}(q)$};
\draw (1.2,0) node[below]{$1/q$};
\draw (0,1) node[left]{$1$};
\draw (0,3) node[left]{$3$};
\draw (0,5) node[left]{$5$};
\draw (0,7) node[left]{$7$};
\draw (1,0) node[below]{$1$};
\draw (0.5,0) node[below]{$0.5$};

\draw[line width=1pt] (0.0,1.0)--
(0.005,1.0)--
(0.01,1.0)--
(0.015,1.0)--
(0.02,1.0)--
(0.025,1.0000000000000226)--
(0.03,1.000000000002772)--
(0.035,1.0000000000877436)--
(0.04,1.0000000011921402)--
(0.045,1.0000000091983545)--
(0.05,1.0000000476980802)--
(0.055,1.0000001850818052)--
(0.06,1.0000005774192455)--
(0.065,1.0000015224257561)--
(0.07,1.0000035156445122)--
(0.075,1.0000072995774074)--
(0.08,1.0000138990278353)--
(0.085,1.0000246395485761)--
(0.09,1.00004114915099)--
(0.095,1.0000653456402255)--
(0.1,1.0000994130277145)--
(0.105,1.000145770683597)--
(0.11,1.0002070385534834)--
(0.115,1.00028600115726)--
(0.12,1.0003855724091784)--
(0.125,1.0005087626642717)--
(0.13,1.0006586488617304)--
(0.135,1.000838348217534)--
(0.14,1.0010509956107634)--
(0.145,1.0012997245946804)--
(0.15,1.0015876518258218)--
(0.155,1.0019178646237688)--
(0.16,1.0022934113351023)--
(0.165,1.0027172941646423)--
(0.17,1.0031924641455774)--
(0.17500000000000002,1.003721817940272)--
(0.18,1.004308196190177)--
(0.185,1.0049543831628027)--
(0.19,1.00566310747371)--
(0.195,1.0064370436904317)--
(0.2,1.007278814652266)--
(0.20500000000000002,1.0081909943644811)--
(0.21,1.0091761113475035)--
(0.215,1.0102366523410784)--
(0.22,1.011375066280374)--
(0.225,1.012593768475681)--
(0.23,1.0138951449399822)--
(0.23500000000000001,1.0152815568194622)--
(0.24,1.016755344891186)--
(0.245,1.0183188340999547)--
(0.25,1.0199743381128827)--
(0.255,1.021724163875755)--
(0.26,1.0235706161598421)--
(0.265,1.025516002091707)--
(0.27,1.027562635661772)--
(0.275,1.0297128422101014)--
(0.28,1.0319689628900943)--
(0.28500000000000003,1.034333359112661)--
(0.29,1.036808416975005)--
(0.295,1.039396551679445)--
(0.3,1.0421002119488114)--
(0.305,1.0449218844458672)--
(0.31,1.0478640982050131)--
(0.315,1.0509294290852134)--
(0.32,1.0541205042536848)--
(0.325,1.0574400067104421)--
(0.33,1.0608906798642803)--
(0.335,1.06447533217126)--
(0.34,1.0681968418472092)--
(0.34500000000000003,1.0720581616662104)--
(0.35000000000000003,1.0760623238575031)--
(0.355,1.0802124451137016)--
(0.36,1.084511731723737)--
(0.365,1.0889634848444465)--
(0.37,1.0935711059253177)--
(0.375,1.0983381023014929)--
(0.38,1.103268092970815)--
(0.385,1.108364814571407)--
(0.39,1.1136321275770669)--
(0.395,1.1190740227286176)--
(0.4,1.1246946277202734)--
(0.405,1.1304982141611117)--
(0.41000000000000003,1.136489204832839)--
(0.41500000000000004,1.1426721812662477)--
(0.42,1.149051891660076)--
(0.425,1.155633259167418)--
(0.43,1.1624213905763814)--
(0.435,1.1694215854133962)--
(0.44,1.1766393454994264)--
(0.445,1.1840803849913384)--
(0.45,1.1917506409428797)--
(0.455,1.1996562844221013)--
(0.46,1.2078037322246504)--
(0.465,1.2161996592251925)--
(0.47000000000000003,1.2248510114122932)--
(0.47500000000000003,1.2337650196554526)--
(0.48,1.2429492142566394)--
(0.485,1.252411440342664)--
(0.49,1.2621598741590774)--
(0.495,1.2722030403310505)--
(0.5,1.282549830161864)--
(0.505,1.293209521045333)--
(0.51,1.3041917970746908)--
(0.515,1.3155067709372643)--
(0.52,1.3271650071917156)--
(0.525,1.3391775470328038)--
(0.53,1.3515559346575727)--
(0.535,1.3643122453567251)--
(0.54,1.3774591154657645)--
(0.545,1.391009774322387)--
(0.55,1.4049780783897365)--
(0.555,1.4193785477195675)--
(0.56,1.4342264049453448)--
(0.5650000000000001,1.449537617012906)--
(0.5700000000000001,1.4653289398758156)--
(0.5750000000000001,1.4816179664041036)--
(0.58,1.498423177778994)--
(0.585,1.5157639986727556)--
(0.59,1.5336608565422614)--
(0.595,1.5521352453976272)--
(0.6,1.5712097944437777)--
(0.605,1.5909083420334806)--
(0.61,1.6112560154157942)--
(0.615,1.6322793168146628)--
(0.62,1.6540062164292146)--
(0.625,1.6764662530110317)--
(0.63,1.6996906427451826)--
(0.635,1.723712397242212)--
(0.64,1.7485664515388193)--
(0.645,1.774289803107054)--
(0.65,1.800921662987175)--
(0.655,1.828503620289802)--
(0.66,1.8570798214608453)--
(0.665,1.8866971658705762)--
(0.67,1.9174055194791082)--
(0.675,1.9492579485481105)--
(0.68,1.9823109756168835)--
(0.685,2.0166248602449834)--
(0.6900000000000001,2.05226390734911)--
(0.6950000000000001,2.0892968063358435)--
(0.7000000000000001,2.127797004662143)--
(0.705,2.1678431199518227)--
(0.71,2.209519395370123)--
(0.715,2.2529162036234003)--
(0.72,2.298130605723487)--
(0.725,2.3452669715558523)--
(0.73,2.394437670341071)--
(0.735,2.4457638403088375)--
(0.74,2.4993762483474966)--
(0.745,2.555416252091884)--
(0.75,2.6140368789197046)--
(0.755,2.6754040387046656)--
(0.76,2.739697890000721)--
(0.765,2.8071143827016662)--
(0.77,2.8778670042528325)--
(0.775,2.9521887613343076)--
(0.78,3.0303344347729464)--
(0.785,3.1125831525056604)--
(0.79,3.199241334002337)--
(0.795,3.290646070035368)--
(0.8,3.387169014529114)--
(0.805,3.48922088104553)--
(0.81,3.597256656047816)--
(0.8150000000000001,3.711781665452216)--
(0.8200000000000001,3.833358661461626)--
(0.8250000000000001,3.9626161350260585)--
(0.8300000000000001,4.100258107815061)--
(0.835,4.247075719412016)--
(0.84,4.403961004713113)--
(0.845,4.571923358878118)--
(0.85,4.7521093203535525)--
(0.855,4.94582647712193)--
(0.86,5.154572532267846)--
(0.865,5.3800708730960345)--
(0.87,5.624314403152542)--
(0.875,5.889619961474026)--
(0.88,6.178696430799341)--
(0.885,6.4947307188811)--
(0.89,6.841497323352471)--
(0.895,7.223499372641914);
\end{tikzpicture}
\end{center}
\caption{The function $1/q\mapsto \zeta(q)^{\frac{1}{q}}$.\label{fig:left-weak-q}}
\end{figure}
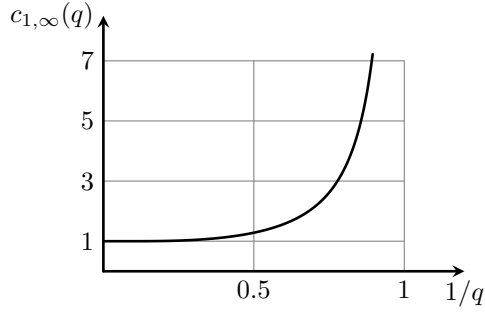

The optimal constant $C_{1,\infty}(q)$ in \eqref{eq:general-weak-l1-q} turns out to be invariant under taking Hölder conjugates.
\begin{Satz}\label{result:right-weak-q}
Let $1<q<\infty$.
The minimal constant $C_{1,\infty}(q)>0$ for which
\begin{equation*}
\sup_{n\in\NN} na_n\leq C_{1,\infty}(q)\sup_{n\in\NN}n\lr{\frac{1}{n}\sum_{k=n}^\infty a_k^q}^{\frac{1}{q}}
\end{equation*}
holds for all $(a_n)_{n\in\NN}$ with $a_1\geq a_2\geq \ldots\geq 0$ is $C_{1,\infty}(q)=\lr{\frac{1}{q}}^{-\frac{1}{q}}\lr{1-\frac{1}{q}}^{-\lr{1-\frac{1}{q}}}$.
\end{Satz}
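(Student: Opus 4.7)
I will establish the two bounds $C_{1,\infty}(q)\leq\lr{\frac{1}{q}}^{-\frac{1}{q}}\lr{1-\frac{1}{q}}^{-\lr{1-\frac{1}{q}}}$ and $C_{1,\infty}(q)\geq\lr{\frac{1}{q}}^{-\frac{1}{q}}\lr{1-\frac{1}{q}}^{-\lr{1-\frac{1}{q}}}$ separately. It is convenient to note that this value equals $q(q-1)^{-(q-1)/q}$.

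For the upper bound, by homogeneity one may normalize so that $\sup_{n\in\NN}n^{1-\frac{1}{q}}\lr{\sum_{k=n}^\infty a_k^q}^{\frac{1}{q}}\leq 1$, which translates to the pointwise constraint $\sum_{k=m}^\infty a_k^q\leq m^{1-q}$ for every $m\in\NN$. For any $n\in\NN$ and any integer $m\in\setn{1,\ldots,n}$, monotonicity of $(a_k)$ gives $(n-m+1)a_n^q\leq\sum_{k=m}^n a_k^q\leq m^{1-q}$, so
\[
na_n\leq\frac{n}{m^{1-\frac{1}{q}}(n-m+1)^{\frac{1}{q}}}.
\]
Treating $m$ as a continuous variable, the denominator is maximized at $m^\ast=\lr{1-\frac{1}{q}}(n+1)$. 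For $n\geq q$, I would take the integer $m=\lfloor\lr{1-\frac{1}{q}}n\rfloor+1$; this choice lies in $\setn{1,\ldots,n}$ and satisfies simultaneously $m>\frac{q-1}{q}n$ and $n-m+1\geq\frac{n}{q}$, so
\[
(n-m+1)m^{q-1}\geq\frac{n}{q}\cdot\lr{\frac{(q-1)n}{q}}^{q-1}=\frac{(q-1)^{q-1}}{q^q}n^q,
\]
which yields $na_n\leq q(q-1)^{-(q-1)/q}$, precisely the claimed constant. The finitely many indices $n<q$ are handled by the trivial choice $m=n$, which gives $na_n\leq n^{\frac{1}{q}}\leq q^{\frac{1}{q}}$; the elementary inequality $q^{\frac{1}{q}}\leq q(q-1)^{-(q-1)/q}$ (which reduces to $q\geq q-1$) closes the gap.

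For the lower bound, I would test the inequality on the step sequences $a_k^{(k_0)}=\mathbf{1}_{k\leq k_0}$ for $k_0\in\NN$, for which $\sup_n na_n^{(k_0)}=k_0$ and the right-hand expression equals $M_{k_0}\defeq\max_{1\leq n\leq k_0}n^{1-\frac{1}{q}}(k_0-n+1)^{\frac{1}{q}}$. This discrete maximum is bounded above by the corresponding continuous maximum, which is attained at $n^\ast=\lr{1-\frac{1}{q}}(k_0+1)$ and equals $(k_0+1)\lr{\frac{1}{q}}^{\frac{1}{q}}\lr{1-\frac{1}{q}}^{1-\frac{1}{q}}$. Hence
\[
\frac{k_0}{M_{k_0}}\geq\frac{k_0}{k_0+1}\cdot\lr{\frac{1}{q}}^{-\frac{1}{q}}\lr{1-\frac{1}{q}}^{-\lr{1-\frac{1}{q}}}\longrightarrow\lr{\frac{1}{q}}^{-\frac{1}{q}}\lr{1-\frac{1}{q}}^{-\lr{1-\frac{1}{q}}}
\]
as $k_0\to\infty$. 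Since each $k_0/M_{k_0}$ is a legitimate lower bound on $C_{1,\infty}(q)$, passing to the limit finishes the argument.

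I expect the integer-valued optimization in the upper bound to be the main technical point: the continuous minimizer $m^\ast$ is generically non-integer, and one needs the specific choice $m=\lfloor\lr{1-\frac{1}{q}}n\rfloor+1$ to deliver both requisite one-sided estimates in a single stroke so that the product $(n-m+1)m^{q-1}$ matches the sharp target $(q-1)^{q-1}n^q/q^q$ without any slack that would degrade the constant. The restriction $n\geq q$ required by this construction is what necessitates the separate (but elementary) small-$n$ argument.
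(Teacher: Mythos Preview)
Your proof is correct and takes a genuinely different route from the paper. For the lower bound you use the same family of test sequences $a^{(k_0)}=\mathbf{1}_{\{1,\ldots,k_0\}}$ and bound the discrete maximum by the continuous one, exactly as the paper does. The real difference lies in the upper bound: the paper first argues that among finite sequences of length $N$ with $\sup_n na_n=1$ the quantity $\sup_n n^{1-1/q}\bigl(\sum_{k\geq n}a_k^q\bigr)^{1/q}$ is minimized by the constant sequence $a_n=1/N$, then shows this minimum is at least $(1/q)^{1/q}(1-1/q)^{1-1/q}$ via a somewhat delicate monotonicity argument for an auxiliary function $g_N(x_N)$ (splitting into the cases $q\geq 2$ and $q\leq 2$ by H\"older duality), and finally passes to infinite sequences by an $\eps$-approximation. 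Your argument bypasses all of this: normalizing the right-hand side and choosing the single integer $m=\lfloor(1-1/q)n\rfloor+1$ yields $na_n\leq q(q-1)^{-(q-1)/q}$ directly for every $n\geq q$ and every (not necessarily finite) admissible sequence, with the finitely many indices $n<q$ handled by the trivial choice $m=n$. The trade-off is that the paper's approach identifies the extremal sequences explicitly, while yours is shorter, more elementary, and avoids both the monotonicity analysis and the separate extension step from finite to infinite sequences.
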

\begin{proof}
\emph{Step 1. We prove the claim for finite sequences.}
The infimum
\begin{equation*}
\inf\setcond{\sup_{n\in\NN}n\lr{\frac{1}{n}\sum_{k=n}^\infty a_k^q}^{\frac{1}{q}}}{a_1\geq a_2\geq\ldots\geq 0,a_{N+1}=0,\sup_{n\in\NN}n a_n=1}
\end{equation*}
is attained at the sequence $(a_n)_{n\in\NN}$ given by $a_n=\frac{1}{N}$ for $n\in\setn{1,\ldots,N}$.
Its value is thus equal to
\begin{equation*}
\sup_{n=1,\ldots,N}n\lr{\frac{1}{n}\sum_{k=n}^N \frac{1}{N^q}}^{\frac{1}{q}}=\sup_{n=1,\ldots,N}\frac{1}{N}n^{1-\frac{1}{q}}(N-n+1)^{\frac{1}{q}}.
\end{equation*}
One can readily check that the derivative of the function $f:(0,N+1)\to \RR$, $f(x)=\frac{1}{N} x^{1-\frac{1}{q}} (N-x+1)^{\frac{1}{q}}$ is given by $f^\prime(x)=\frac{1}{Nq}x^{-\frac{1}{q}}(N-x+1)^{\frac{1}{q}-1}((N+1)(q-1)-qx)$.
Therefore, $f$ is maximized at $x=(N+1)\lr{1-\frac{1}{q}}$, the maximum is $\frac{N+1}{N}\lr{\frac{1}{q}}^{\frac{1}{q}}\lr{1-\frac{1}{q}}^{1-\frac{1}{q}}$, and $f$ is monotonically increasing for $x<(N+1)\lr{1-\frac{1}{q}}$ and monotonically decreasing for $x>(N+1)\lr{1-\frac{1}{q}}$.
Hence we have shown that
\begin{align*}
&\inf\setcond{\sup_{n\in\NN}n\lr{\frac{1}{n}\sum_{k=n}^\infty a_k^q}^{\frac{1}{q}}}{a_1\geq a_2\geq\ldots\geq 0,a_{N+1}=0,\sup_{n\in\NN}n a_n=1}\\
&=\sup_{n=1,\ldots,N}n^{1-\frac{1}{q}}\lr{\frac{N-n+1}{N^q}}^{\frac{1}{q}}\\
&\leq\sup_{x\in (0,N+1)}x^{1-\frac{1}{q}}\lr{\frac{N-x+1}{N^q}}^{\frac{1}{q}}\\
&=\frac{N+1}{N}\lr{\frac{1}{q}}^{\frac{1}{q}}\lr{1-\frac{1}{q}}^{1-\frac{1}{q}}.
\end{align*}
Taking the infimum over $N\in\NN$, we see that 
\begin{equation*}
\inf_{N\in\NN}\sup_{n=1,\ldots,N} n^{1-\frac{1}{q}}\lr{\frac{N-n+1}{N^q}}^{\frac{1}{q}}\leq \lr{\frac{1}{q}}^{\frac{1}{q}}\lr{1-\frac{1}{q}}^{1-\frac{1}{q}}
\end{equation*}
for finite sequences $(a_n)_{n\in\NN}$ with $a_1\geq a_2\geq\ldots\geq 0$, which also shows that $C_{1,\infty}(q)\leq\lr{\frac{1}{q}}^{-\frac{1}{q}}\lr{1-\frac{1}{q}}^{-\lr{1-\frac{1}{q}}}$.
Next we show that also
\begin{equation*}
\inf_{N\in\NN}\sup_{n=1,\ldots,N} n^{1-\frac{1}{q}}\lr{\frac{N-n+1}{N^q}}^{\frac{1}{q}}\geq \lr{\frac{1}{q}}^{\frac{1}{q}}\lr{1-\frac{1}{q}}^{1-\frac{1}{q}}
\end{equation*}
for finite sequences $(a_n)_{n\in\NN}$ with $a_1\geq a_2\geq\ldots\geq 0$ and we do it for $q\geq 2$ first.
In this case, we have $x_N\defeq 1-\frac{1}{q}-\frac{1}{N+1}\in (0,1)$ for all $N\in\NN$.
The monotonicity properties of the function $g_N:(0,1)\to\RR$, $g_N(x)=f((N+1)x)=\frac{N+1}{N}x^{1-\frac{1}{q}}(1-x)^\frac{1}{q}$ yield
\begin{align*}
\sup\setcond{g_N(x)}{x=\frac{1}{N+1},\ldots,\frac{N}{N+1}}&\geq g_N\lr{\frac{1}{N+1}\floor{(N+1)\lr{1-\frac{1}{q}}}}\\
&\geq g_N(x_N).
\end{align*}
If we can show that $(g_N(x_N))_{N\geq 3}$ is monotonically decreasing, then we know that
\begin{align*}
\inf_{N\geq 3} g_N(x_N)=\lim_{N\to\infty}g_N(x_N)=\lr{\frac{1}{q}}^{\frac{1}{q}}\lr{1-\frac{1}{q}}^{1-\frac{1}{q}}
\end{align*}
and, in turn,
\begin{align*}
&\inf_{N\in\NN}\sup_{n=1,\ldots,N} n^{1-\frac{1}{q}}\lr{\frac{N-n+1}{N^q}}^{\frac{1}{q}}\\
&=\inf\setn{1,\sup\setn{\frac{1}{2}2^{\frac{1}{q}},\frac{1}{2}2^{1-\frac{1}{q}}},\inf_{N\geq 3}\sup_{n=1,\ldots,N} n^{1-\frac{1}{q}}\lr{\frac{N-n+1}{N^q}}^{\frac{1}{q}}}\\
&\geq\inf\setn{1,\sup\setn{\frac{1}{2}2^{\frac{1}{q}},\frac{1}{2}2^{1-\frac{1}{q}}},\inf_{N\geq 3}g_N\lr{x_N}}\\
&\geq\lr{\frac{1}{q}}^{\frac{1}{q}}\lr{1-\frac{1}{q}}^{1-\frac{1}{q}}.
\end{align*}
Indeed, let $a\defeq \frac{1}{q}\in (0,\frac{1}{2}]$, and consider the function $h:(0,1-a)\to\RR$, $h(t)=\lr{\frac{1-a-t}{1-t}}^{1-a}\lr{\frac{a+t}{1-t}}^a$.
Then $g_N(x_N)=h(\frac{1}{N+1})$, and it is sufficient to show that $h$ is monotonically increasing on $(0,\frac{1}{4})$ when $a\leq \frac{1}{2}$.
The derivative
\begin{align*}
h^\prime(t)&=(1-a)\lr{\frac{1-a-t}{1-t}}^{-a}\frac{-a}{(1-t)^2}\lr{\frac{a+t}{1-t}}^a\\
&\qquad+a\lr{\frac{a+t}{1-t}}^{a-1}\frac{1+a}{(1-t)^2}\lr{\frac{1-a-t}{1-t}}^{1-a}\\
&=\frac{1}{(1-t)^2}\lr{\frac{a+t}{1-a-t}}^{a-1}\lr{(1-a)\frac{a+t}{1-a-t}(-a)+a(1+a)}
\end{align*}
is $\geq 0$ if and only if $(1-a)\frac{a+t}{1-a-t}(-a)+a(1+a)\geq 0$ if and only if $a(a+t-1)(a+2t-1)\geq 0$.
For $t\in (0,1-a)$, we have $a(a+t-1)\leq 0$, so $h^\prime(t)\geq 0$ when $a+2t-1\leq 0$.
The latter is fulfilled when $t\leq\frac{1}{4}$.

For $q\leq 2$, we consider the function $(0,1)\ni x\mapsto g_N(1-x)$ instead of $g_N$.
This is the same as $g_N$ for the Hölder conjugate $q^\prime\geq 2$, and this is covered by the first case.

\emph{Step 2. We prove the claim for all sequences.}
It remains to show that
\begin{equation*}
\frac{\sup_{n\in\NN}n^{1-\frac{1}{q}}\lr{\sum_{k=n}^\infty a_k^q}^{\frac{1}{q}}}{\sup_{n\in\NN}n a_n}\geq\lr{\frac{1}{q}}^{\frac{1}{q}}\lr{1-\frac{1}{q}}^{1-\frac{1}{q}}
\end{equation*}
for all sequences $(a_n)_{n\in\NN}$ with $a_1\geq a_2\geq \ldots\geq 0$.
Let $\eps>0$.
Then, for all sequences $(a_n)_{n\in\NN}$ with $a_1\geq a_2\geq\ldots\geq 0$ and $\sup\setcond{na_n}{n\in\NN}=1$, there exists a number $N\in\NN$ such that $a_N\geq\frac{1-\eps}{N}$.
It follows that
\begin{align*}
&\sup_{n\in\NN}n^{1-\frac{1}{q}}\lr{\sum_{k=n}^\infty a_k^q}^{\frac{1}{q}}\geq\sup_{n=1,\ldots,N}n^{1-\frac{1}{q}}\lr{\sum_{k=n}^N \frac{1}{N^q}}^{\frac{1}{q}}(1-\eps)\\
&=\sup_{n=1,\ldots,N}n^{1-\frac{1}{q}}\lr{\frac{N-n+1}{N^q}}^{\frac{1}{q}}(1-\eps)\geq\lr{\frac{1}{q}}^{\frac{1}{q}}\lr{1-\frac{1}{q}}^{1-\frac{1}{q}}(1-\eps).
\end{align*}
Taking the limit $\eps\downarrow 0$ yields the desired inequality.
\end{proof}

The result of \cref{result:right-weak-q} is depicted in \cref{fig:right-weak-q}.
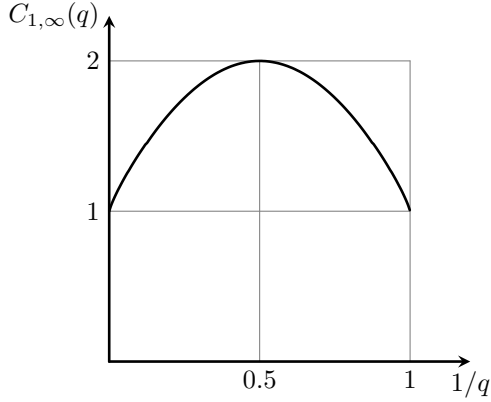
\begin{figure}[H]
\begin{center}
\begin{tikzpicture}[line cap=round,line join=round,>=stealth,x=4cm,y=2cm]
\draw[black!50] (1,0)--(1,2);
\draw[black!50] (0,1)--(1,1);
\draw[black!50] (0,2)--(1,2);
\draw[black!50] (0.5,0)--(0.5,2);
\draw[line width=1pt,->] (0,0)--(1.2,0);
\draw[line width=1pt,->] (0,0)--(0,2.3);
\draw (0,2.3) node[left]{$C_{1,\infty}(q)$};
\draw (1.2,0) node[below]{$1/q$};
\draw (0,1) node[left]{$1$};
\draw (1,0) node[below]{$1$};
\draw (0.5,0) node[below]{$0.5$};
\draw (0,2) node[left]{$2$};

\draw[line width=1pt,smooth,samples=200,domain=2.899999996260959E-8:0.9999990250810828] plot(\x,{1/((\x)^((\x))*(1-(\x))^(1-(\x)))});
\end{tikzpicture}
\end{center}
\caption{The function $1/q\mapsto \lr{\frac{1}{q}}^{-\frac{1}{q}}\lr{1-\frac{1}{q}}^{-\lr{1-\frac{1}{q}}}$.\label{fig:right-weak-q}}
\end{figure}

\section{The continuous Stechkin inequalities}
In this section, we are concerned with the optimal constants $\bar{c}_1(q)$, $\bar{C}_1(q)$, $\bar{c}_{1,\infty}(q)$, and $\bar{C}_{1,\infty}(q)>0$ in the inequalities
\begin{equation*}
\frac{1}{\bar{c}_1(q)}\int_0^\infty \lr{\frac{1}{t}\int_t^\infty f(s)^q \dd s}^{\frac{1}{q}}\dd t\leq \int_0^\infty f(t)\dd t\leq \bar{C}_1(q)\int_0^\infty \lr{\frac{1}{t}\int_t^\infty f(s)^q \dd s}^{\frac{1}{q}}\dd t
\end{equation*}
and
\begin{equation*}
\frac{1}{\bar{c}_{1,\infty}(q)}\sup_{t>0}t\lr{\frac{1}{t}\int_t^\infty f(s)^q\dd s}^{\frac{1}{q}}\leq\sup_{t>0} tf(t)\leq \bar{C}_{1,\infty}(q)\sup_{t>0}t\lr{\frac{1}{t}\int_t^\infty f(s)^q\dd s}^{\frac{1}{q}}
\end{equation*}
for monotonically decreasing functions $f:(0,\infty)\to [0,\infty)$.
In both cases, precise values for the optimal constants are available, either through the literature or shown here.
The proofs in this section are independent of their discrete counterparts, yet there is a strong resemblance in the case of the weak Stechkin inequalities.
(The arguments turn out to be less tedious in the continuous inequality, though.)

\subsection{The strong continuous Stechkin inequality}
Hardy, Littlewood, and Pól\-ya \cite[Theorem~337]{HardyLiPo1934} show that for $1<q<\infty$, the minimal constant $\bar{C}_1(q)>0$ for which
\begin{equation}
\frac{1}{\bar{c}_1(q)}\int_0^\infty \lr{\frac{1}{t}\int_t^\infty f(s)^q \dd s}^{\frac{1}{q}}\dd t\leq \int_0^\infty f(t)\dd t\leq \bar{C}_1(q)\int_0^\infty \lr{\frac{1}{t}\int_t^\infty f(s)^q \dd s}^{\frac{1}{q}}\dd t\label{eq:general-stechkin-continuous}
\end{equation}
holds true for all monotonically decreasing functions $f:(0,\infty)\to [0,\infty)$ is $\bar{C}_1(q)=(q-1)^{\frac{1}{q}}$ when $1<q<\infty$.
With the modification indicated in \cref{chap:intro}, inequality \eqref{eq:general-stechkin-continuous} holds true also for $q=\infty$.
The monotonicity assumption on $f:(0,\infty)\to [0,\infty)$ gives $\sup\setcond{f(s)}{s\geq t}=f(t)$, so $\bar{c}_1(\infty)=\bar{C}_1(\infty)=1$.
For $q=1$, the right-hand side of \eqref{eq:general-stechkin-continuous} holds in the sense that $\int_0^\infty\lr{\frac{1}{t}\int_t^\infty f(s)^q \dd s}^{\frac{1}{q}}\dd t$ diverges when $\int_0^\infty f(t)\dd t$ is finite.
Therefore $\lim_{q\to 1}\bar{C}_1(q)=0$.
For this, choose a function $f:(0,\infty)\to[0,\infty)$ with $\int_0^\infty f(t)\dd t\neq 0$.
Then $F(t)\defeq\int_t^\infty f(s)^q \dd s$ defines a monotonically decreasing function $F:(0,\infty)\to [0,\infty)$ for which there exist $\eps>0$ and $\delta>0$ such that $F(t)\geq \delta$ for all $t\in (0,\eps)$.
It follows that $\int_0^\infty \frac{1}{t}F(t)\dd t\geq \int_0^\eps\frac{1}{t}F(t)\dd t\geq\int_0^\eps\frac{1}{t}\delta\dd t=\infty$.

We complement these results by computing the minimal constant $\bar{c}_1(q)>0$ appearing in \eqref{eq:general-stechkin-continuous}.
\begin{Satz}\label{result:left-stechkin-continuous}
Let $1<q<\infty$.
The minimal constant $\bar{c}_1(q)>0$ for which
\begin{equation*}
\int_0^\infty \lr{\frac{1}{t}\int_t^\infty f(s)^q \dd s}^{\frac{1}{q}}\dd t\leq\bar{c}_1(q)\int_0^\infty f(t)\dd t
\end{equation*}
holds for all monotonically decreasing functions $f:(0,\infty)\to [0,\infty)$ is $\bar{c}_1(q)=\frac{\piup}{q\sin(\frac{\piup}{q})}$.
\end{Satz}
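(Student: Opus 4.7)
My approach is to reduce both directions of the inequality to the single test case $f = \chi_{(0,T)}$, for which the constant $\frac{\piup}{q\sin(\piup/q)}$ emerges from an explicit Beta function computation. The key observation is that the operator
\[
T(f)(t)\defeq\lr{\frac{1}{t}\int_t^\infty f(s)^q\,\dd s}^{\frac{1}{q}}
\]
is sublinear in $f$ (it is a weighted $L^q$ norm), and that every monotonically decreasing $f:(0,\infty)\to[0,\infty)$ admits a layer cake representation as a positive superposition of characteristic functions of intervals $(0,a)$.

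For the lower bound, I would substitute $f=\chi_{(0,T)}$ directly. The right-hand side equals $T$, while
\[
\int_0^\infty T(f)(t)\,\dd t = \int_0^T\lr{\frac{T-t}{t}}^{\frac{1}{q}}\dd t = T\int_0^1 u^{-\frac{1}{q}}(1-u)^{\frac{1}{q}}\,\dd u = T\cdot B\!\lr{1-\tfrac{1}{q},1+\tfrac{1}{q}}
\]
after the substitution $u=t/T$. The reflection formula $\Gamma(\tfrac{1}{q})\Gamma(1-\tfrac{1}{q})=\piup/\sin(\piup/q)$ together with $\Gamma(1+\tfrac{1}{q})=\tfrac{1}{q}\Gamma(\tfrac{1}{q})$ yield $B(1-\tfrac{1}{q},1+\tfrac{1}{q})=\frac{\piup}{q\sin(\piup/q)}$. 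Hence $\bar{c}_1(q)\geq \frac{\piup}{q\sin(\piup/q)}$.

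For the upper bound, I would use the layer cake decomposition $f(t)=\int_0^\infty \chi_{(0,d_f(\lambda))}(t)\,\dd\lambda$, where $d_f(\lambda)\defeq\abs{\setcond{t>0}{f(t)>\lambda}}$ and $\setcond{f>\lambda}=(0,d_f(\lambda))$ by monotonicity of $f$. Since $T$ is an $L^q$-type norm in its argument, Minkowski's integral inequality gives
\[
T(f)(t)\leq \int_0^\infty T\!\lr{\chi_{(0,d_f(\lambda))}}(t)\,\dd\lambda.
\]
Integrating over $t\in(0,\infty)$, swapping the order of integration by Tonelli, and invoking the characteristic-function computation from the first paragraph yields
\[
\int_0^\infty T(f)(t)\,\dd t \leq \frac{\piup}{q\sin(\piup/q)}\int_0^\infty d_f(\lambda)\,\dd\lambda = \frac{\piup}{q\sin(\piup/q)}\int_0^\infty f(t)\,\dd t,
\]
where the last equality is the layer cake identity applied to $f$ itself.

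The main obstacle is likely ensuring that Minkowski's integral inequality applies cleanly; this requires only $q\geq 1$ together with the positivity and measurability of the integrand, so it should go through without complication. A minor point is that one may assume $\int_0^\infty f(t)\,\dd t<\infty$ (otherwise the asserted inequality is trivial), in which case $d_f$ is finite almost everywhere and every integral above is well-defined. The structural parallel with the discrete case (\cref{result:left-stechkin}) is striking: there, the extremal sequences were multiples of $\chi_{\setn{1,\ldots,k_0}}$ and the optimal constant emerged in a $k_0\to\infty$ limit of a Riemann sum — precisely the Beta integral we have just computed.
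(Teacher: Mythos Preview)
Your proposal is correct and rests on the same core idea as the paper: both the lower and upper bounds reduce to the single test case $f=\chi_{(0,T)}$, for which the Beta integral gives $\frac{\piup}{q\sin(\piup/q)}$. The lower-bound arguments are identical.

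For the upper bound the executions differ. The paper approximates $f$ from below by step functions $h_\eps=\sum_{n}\eps\,\chi_{[0,T_n]}$ (a discretized layer cake), applies the \emph{finite} triangle inequality for the $L^q$-type functional, and then passes to the limit $\eps\downarrow 0$ via dominated convergence. You instead invoke Minkowski's integral inequality directly on the continuous layer-cake representation $f=\int_0^\infty\chi_{(0,d_f(\lambda))}\,\dd\lambda$, which collapses the approximation and limit steps into a single line. Your route is cleaner and shorter; the paper's discretization is marginally more elementary in that it avoids Minkowski for integrals, but at the cost of an extra limiting argument. Conceptually the two are the same proof.
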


\begin{proof}
Lower bounds on the constant $\bar{c}_1(q)$ are given by the quotients
\begin{equation}
\frac{\int_0^t \lr{\frac{1}{t}\int_t^\infty f(s)^q \dd s}^{\frac{1}{q}}\dd t}{\int_0^\infty f(t)\dd t}\label{eq:left-stechkin-continuous-lower-bound}
\end{equation}
with $f:(0,\infty)\to [0,\infty)$ a monotonically decreasing function.
For $T\in (0,\infty)$, take $f=\frac{1}{T}\chi_{(0,T)}:(0,\infty)\to [0,\infty)$ in \eqref{eq:left-stechkin-continuous-lower-bound}, where $\chi_{(0,T)}$ denotes the function which is $1$ on $(0,T)$ and $0$ on $[T,\infty)$.
Then $\int_0^\infty f(t)\dd t=1$ and
\begin{align*}
&\int_0^\infty \lr{\frac{1}{t}\int_t^\infty f(s)^q \dd s}^{\frac{1}{q}}\dd t=\frac{1}{T}\int_0^\infty \lr{\frac{1}{t}\int_t^\infty \chi_{(0,T)}(s) \dd s}^{\frac{1}{q}}\dd t\\
&=\frac{1}{T}\int_0^T \lr{\frac{1}{t}\int_t^\infty \chi_{(0,T)}(s) \dd s}^{\frac{1}{q}}\dd t=\frac{1}{T}\int_0^T \lr{\frac{T-t}{t}}^{\frac{1}{q}}\dd t\\
&=B\lr{1-\frac{1}{q},1+\frac{1}{q}}=\frac{\piup}{q\sin(\frac{\piup}{q})}
\end{align*}
independently of $T$.
(Here $B(x,y)\defeq\int_0^1 t^{x-1}(1-t)^{y-1}\dd t$ denotes the beta function.)
This shows $\bar{c}_1(q)\geq\frac{\piup}{q\sin(\frac{\piup}{q})}$.

Now fix a function $f:[0,\infty)\to [0,\infty)$ with $\int_0^\infty f(x)\dd x=1$.
For $\eps>0$, let $N\defeq\floor{\frac{1}{\eps}\sup_{t\in [0,\infty)}f(t)}\in \NN\cup\setn{\infty}$.
For $n\in\NN$ with $n\leq N$, let 
\begin{equation*}
T_n\defeq \sup\setcond{T>0}{f(t)\geq \eps n \fall t\in (0,T)},
\end{equation*}
$\lambda_n\defeq \eps T_n$, and $g_n\defeq \frac{1}{\lambda_n}\eps\chi_{[0,T_n]}$.
Then $0\leq h_\eps(t)\defeq\sum_{n=1}^N \lambda_n g_n(t)\leq f(t)$ for all $t\in [0,\infty)$ and $\int_0^\infty g_n(t)\dd t =1$ for all $n$, so $\sum_{n=1}^N\lambda_n=\int_0^\infty \sum_{n=1}^N \lambda_n g_n(t)\dd t\leq \int_0^\infty f(t)\dd t=1$.
From the triangle inequality for integrals, it follows that
\begin{align*}
\int_0^\infty \lr{\frac{1}{t}\int_t^\infty h_\eps(s)^q\dd s}^{\frac{1}{q}}\dd t&\leq \sum_{n=1}^N \lambda_n \int_0^\infty \lr{\frac{1}{t}\int_t^\infty g_n(s)^q\dd s}^{\frac{1}{q}}\dd t\\
&=\sum_{n=1}^N \lambda_n \frac{\piup}{q\sin(\frac{\piup}{q})}\leq\frac{\piup}{q\sin(\frac{\piup}{q})}.
\end{align*}
With the abbreviations $H_\eps(t)\defeq \lr{\int_t^\infty h_\eps(s)^q\dd s}^{\frac{1}{q}}$ and $F(t)\defeq \lr{\int_t^\infty f(s)^q\dd s}^{\frac{1}{q}}$, we have $\lim_{k\to\infty} H_{2^{-k}}(t)=F(t)$ and $0\leq H_{2^{-k}}(t)\leq F(t)$ for all $t\in (0,\infty)$ and $k\in\NN$.
The dominated convergence theorem then yields
\begin{align*}
\int_0^\infty F(t)\dd t&=\int_0^\infty \lim_{k\to\infty}H_{2^{-k}}(t)\dd t=\lim_{k\to\infty}\int_0^\infty H_{2^{-k}}(t)\dd t\leq \frac{\piup}{q\sin(\frac{\piup}{q})}.
\end{align*}
This shows $\bar{c}_1(q)\leq \frac{\piup}{q\sin(\frac{\piup}{q})}$ and completes the proof.
\end{proof}

The precise values for $\bar{c}_1(q)$ and $\bar{C}_1(q)$ given in \cref{result:left-stechkin-continuous} and \cite[Theorem~337]{HardyLiPo1934} are illustrated in \cref{fig:stechkin-continuous}.
\begin{figure}[H]
\begin{center}
\begin{tikzpicture}[line cap=round,line join=round,>=stealth,x=3cm,y=0.554cm]
\draw[black!50] (0,1)--(1,1);
\draw[black!50] (0,3)--(1,3);
\draw[black!50] (0,5)--(1,5);
\draw[black!50] (0.5,0)--(0.5,5);
\draw[black!50] (1,0)--(1,5);
\draw[line width=1pt,->] (0,0)--(1.2,0);
\draw[line width=1pt,->] (0,0)--(0,6.5);
\draw (0,6.5) node[left]{$\bar{c}_1(q)$};
\draw (1.2,0) node[below]{$1/q$};
\draw (0,1) node[left]{$1$};
\draw (0,3) node[left]{$3$};
\draw (0,5) node[left]{$5$};
\draw (1,0) node[below]{$1$};
\draw (0.5,0) node[below]{$0.5$};

\draw[color=black,line width=1pt,smooth,samples=200,domain=0.001:0.8] plot(\x,{3.141592653589793*(\x)/sin((3.141592653589793*(\x))*180/pi)});
\end{tikzpicture}
\qquad
\begin{tikzpicture}[line cap=round,line join=round,>=stealth,x=3cm,y=2cm]
\draw[black!50] (1,0)--(1,1.5);
\draw[black!50] (0,1)--(1,1);
\draw[black!50] (0.5,0)--(0.5,1.5);
\draw[black!50] (0,0.5)--(1,0.5);
\draw[black!50] (0,1.5)--(1,1.5);
\draw[line width=1pt,->] (0,0)--(1.2,0);
\draw[line width=1pt,->] (0,0)--(0,1.8);
\draw (0,1.8) node[left]{$\bar{C}_1(q)$};
\draw (1.2,0) node[below]{$1/q$};
\draw (0,1) node[left]{$1$};
\draw (1,0) node[below]{$1$};
\draw (0.5,0) node[below]{$0.5$};

\draw[line width=1pt,smooth,samples=200,domain=0.001:0.9999996591791471] plot(\x,{(1/(\x)-1)^((\x))});
\end{tikzpicture}
\end{center}
\caption{The function $1/q\mapsto \frac{\piup}{q\sin(\frac{\piup}{q})}$ (left) and $1/q\mapsto (q-1)^{\frac{1}{q}}$ (right).\label{fig:stechkin-continuous}}
\end{figure}
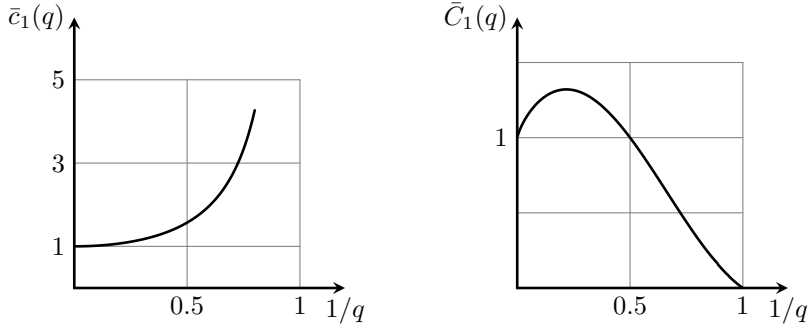

\cref{result:left-stechkin-continuous} and \cite[Theorem~337]{HardyLiPo1934} can be transformed into a result on functions $f:\Omega\to\RR$ defined on a measure space $(\Omega,\mu)$.
For such a function $f$, the assignment $f^\ast(t)\defeq\inf\setcond{s>0}{\mu(\setcond{x\in\Omega}{\abs{f(x)}>s})\leq t}$ defines a function $f^\ast:(0,\infty)\to [0,\infty)$, called the \emph{non-increasing rearrangement} of $f$.
\begin{Kor}
Let $(\Omega,\mu)$ be a measure space and $1<q<\infty$.
The minimal constants $c(q),C(q)>0$ for which
\begin{align*}
\frac{1}{c(q)}\int_0^\infty \lr{\frac{1}{t}\int_t^\infty f^\ast(s)^q \dd s}^{\frac{1}{q}}\dd t&\leq \int_0^\infty \abs{f(x)}\dd \mu(x)\\
&\leq C(q)\int_0^\infty \lr{\frac{1}{t}\int_t^\infty f^\ast(s)^q \dd s}^{\frac{1}{q}}\dd t
\end{align*}
holds for all functions $f:\Omega\to \RR$ are $c(q)=\bar{c}_1(q)$ and $C(q)=\bar{C}_1(q)$.
\end{Kor}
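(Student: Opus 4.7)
The plan is to reduce the corollary directly to \cref{result:left-stechkin-continuous} and \cite[Theorem~337]{HardyLiPo1934} by exploiting the equimeasurability property of the non-increasing rearrangement. Recall that $f^\ast:(0,\infty)\to[0,\infty)$ is by construction monotonically decreasing and equimeasurable with $\abs{f}$, which in particular yields the layer-cake identity
\begin{equation*}
\int_\Omega\abs{f(x)}\dd\mu(x)=\int_0^\infty f^\ast(t)\dd t.
\end{equation*}

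With this identity in hand, the two chains of inequalities in the corollary are literally the chain of inequalities in \eqref{eq:general-stechkin-continuous} applied to the monotonically decreasing function $f^\ast$. Hence \cref{result:left-stechkin-continuous} and \cite[Theorem~337]{HardyLiPo1934} immediately give $c(q)\leq\bar{c}_1(q)$ and $C(q)\leq\bar{C}_1(q)$.

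For optimality in the opposite direction, I would observe that every monotonically decreasing function $g:(0,\infty)\to[0,\infty)$ coincides (up to a null set) with its own non-increasing rearrangement. Thus, specializing to the measure space $\Omega=(0,\infty)$ equipped with Lebesgue measure and taking $f=g$, the two expressions in the corollary reduce to those in \eqref{eq:general-stechkin-continuous}. Consequently, any admissible $c(q)$ and $C(q)$ must also be admissible constants in \eqref{eq:general-stechkin-continuous}, which forces $c(q)\geq \bar{c}_1(q)$ and $C(q)\geq\bar{C}_1(q)$.

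There is no real obstacle here; the only point requiring a line of justification is the equimeasurability identity for the rearrangement, which is a standard fact (see, e.g., \cite[Chapter~II]{BerghLo1976}) and could be invoked without proof. Everything else is a direct transfer between the function on the measure space and its rearrangement on the half-line.
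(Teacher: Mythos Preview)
Your proposal is correct and follows essentially the same route as the paper: invoke the layer-cake identity $\int_\Omega\abs{f}\dd\mu=\int_0^\infty f^\ast(t)\dd t$ and then apply \cref{result:left-stechkin-continuous} and \cite[Theorem~337]{HardyLiPo1934} to the monotonically decreasing function $f^\ast$. If anything, you are more explicit than the paper about the optimality direction (specializing to $\Omega=(0,\infty)$ with Lebesgue measure so that $f=f^\ast$), which the paper leaves implicit in its one-line proof.
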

\begin{proof}
Note that $\int_\Omega\abs{f(x)}\dd\mu(x)=\int_0^\infty f^\ast(t)\dd t$ and apply \cref{result:left-stechkin-continuous} and \cite[Theorem~337]{HardyLiPo1934} to $f^\ast$.
\end{proof}

\subsection{The weak continuous Stechkin inequality}
Here we compute the optimal constants $\bar{c}_{1,\infty}(q)$ and $\bar{C}_{1,\infty}(q)>0$, for which the inequality
\begin{equation}
\frac{1}{\bar{c}_{1,\infty}(q)}\sup_{t>0}t\lr{\frac{1}{t}\int_t^\infty f(s)^q\dd s}^{\frac{1}{q}}\leq\sup_{t>0} tf(t)\leq \bar{C}_{1,\infty}(q)\sup_{t>0}t\lr{\frac{1}{t}\int_t^\infty f(s)^q\dd s}^{\frac{1}{q}}\label{eq:general-weak-l1-continuous}
\end{equation}
holds true for all monotonically decreasing functions $f:(0,\infty)\to [0,\infty)$, when $1<q<\infty$.
With the modification indicated in \cref{chap:intro}, inequality \eqref{eq:general-weak-l1-continuous} holds true also for $q=\infty$.
The monotonicity assumption on $f$ gives $\sup\setcond{f(s)}{s\geq t}=f(t)$, so $\bar{c}_{1,\infty}(\infty)=\bar{C}_{1,\infty}(\infty)=1$.
For $q=1$, we have $\bar{C}_{1,\infty}(1)=1$ because
\begin{equation*}
\sup_{t>0} tf(t)=\sup_{t>0}\int_0^t f(t)\dd s\leq\int_0^\infty f(s)\dd s=\sup_{t>0}\int_t^\infty f(s)\dd s.
\end{equation*}

\begin{Satz}\label{result:left-weak-continuous}
Let $1<q<\infty$.
The minimal constant $\bar{c}_{1,\infty}(q)>0$ for which
\begin{equation*}
\sup_{t>0}t\lr{\frac{1}{t}\int_t^\infty f(s)^q\dd s}^{\frac{1}{q}}\leq \bar{c}_{1,\infty}(q)\sup_{t>0} tf(t)
\end{equation*}
holds for all monotonically decreasing functions $f:(0,\infty)\to [0,\infty)$ is $\bar{c}_{1,\infty}(q)=(q-1)^{-\frac{1}{q}}$.
\end{Satz}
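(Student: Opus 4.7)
The plan is to mirror the computation in \cref{result:left-weak-q}, with the Riemann zeta-function replaced by the integral $\int_1^\infty s^{-q}\,\dd s=(q-1)^{-1}$. The argument splits naturally into an upper bound valid for every monotonically decreasing $f$ and a matching sharp example.

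For the upper bound, fix a monotonically decreasing $f:(0,\infty)\to[0,\infty)$ with $M\defeq\sup_{s>0}sf(s)<\infty$. The defining property of $M$ already gives the pointwise estimate $f(s)\leq M/s$ for every $s>0$; no monotonicity is needed here. Substituting this into the inner integral yields
\begin{equation*}
\int_t^\infty f(s)^q\,\dd s \leq M^q\int_t^\infty s^{-q}\,\dd s = \frac{M^q}{q-1}\,t^{1-q},
\end{equation*}
and therefore
\begin{equation*}
t\lr{\frac{1}{t}\int_t^\infty f(s)^q\,\dd s}^{\frac{1}{q}} \leq t^{1-\frac{1}{q}}\lr{\frac{M^q\,t^{1-q}}{q-1}}^{\frac{1}{q}} = M\,(q-1)^{-\frac{1}{q}}
\end{equation*}
for all $t>0$. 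Taking the supremum in $t$ gives $\bar{c}_{1,\infty}(q)\leq (q-1)^{-\frac{1}{q}}$.

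For the matching lower bound, I will test with the function $f(t)\defeq 1/t$, which is monotonically decreasing on $(0,\infty)$ and satisfies $\sup_{t>0}tf(t)=1$. Direct computation gives $\int_t^\infty s^{-q}\,\dd s=\frac{t^{1-q}}{q-1}$, so that
\begin{equation*}
t\lr{\frac{1}{t}\int_t^\infty f(s)^q\,\dd s}^{\frac{1}{q}} = t^{1-\frac{1}{q}}\lr{\frac{t^{1-q}}{q-1}}^{\frac{1}{q}} = (q-1)^{-\frac{1}{q}},
\end{equation*}
independently of $t>0$. Consequently the ratio equals $(q-1)^{-\frac{1}{q}}$, which forces $\bar{c}_{1,\infty}(q)\geq (q-1)^{-\frac{1}{q}}$ and closes the proof.

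There is no real obstacle: the only substantive observation is that the supremum $\sup_{s>0}sf(s)$ automatically dominates $f$ by $M/s$ pointwise, which is exactly what is needed to reduce the problem to an explicit one-line integral. The extremal function $f(t)=1/t$ is the continuous analogue of the sequence $a_n=1/n$ used in the discrete case, but here the supremum over $t$ is in fact attained everywhere, whereas in \cref{result:left-weak-q} it was attained at $n=1$.
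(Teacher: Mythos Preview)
Your proof is correct and follows essentially the same approach as the paper: both use the pointwise bound $f(s)\leq M/s$ to reduce the inner integral to $\int_t^\infty s^{-q}\,\dd s$, and both identify $f(t)=1/t$ as the extremal function. The only cosmetic difference is that the paper normalizes to $M=1$ from the outset, whereas you keep $M$ explicit throughout.
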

\begin{proof}
The supremum of the expression
\begin{equation}
\sup_{t>0}t^{1-\frac{1}{q}}\lr{\int_t^\infty f(s)^q\dd s}^{\frac{1}{q}}\label{eq:left-weak-continuous}
\end{equation}
over the monotonically decreasing functions $f:(0,\infty)\to [0,\infty)$ with $\sup_{t>0}tf(t)=1$ is attained at the function $f:(0,\infty)\to [0,\infty)$ defined by $f(t)=\frac{1}{t}$.
Indeed, for any monotonically decreasing function $f:(0,\infty)\to [0,\infty)$ with $\sup_{t>0}tf(t)=1$, we have $0\leq f(t)\leq\frac{1}{t}$ for all $t>0$.
Therefore, $\lr{\int_t^\infty f(s)^q\dd s}^{\frac{1}{q}}\leq \lr{\int_t^\infty\frac{1}{s^q}\dd s}^{\frac{1}{q}}$ for all $t>0$ with equality if and only if $f(t)=\frac{1}{t}$ for all $t>0$.
Also, we have $\sup_{t>0} t\frac{1}{t}=1$, and \eqref{eq:left-weak-continuous} evaluates to $\sup_{t>0}t^{1-\frac{1}{q}}\lr{\int_t^\infty\frac{1}{s^q}\dd s}^{\frac{1}{q}}=(q-1)^{-\frac{1}{q}}$.
\end{proof}

The optimal constant $\bar{C}_{1,\infty}(q)$ in \eqref{eq:general-weak-l1-continuous} coincides with its discrete counterpart.
\begin{Satz}\label{result:right-weak-continuous}
Let $1<q<\infty$.
The minimal constant $\bar{C}_{1,\infty}(q)>0$ for which
\begin{equation*}
\sup_{t>0} tf(t)\leq \bar{C}_{1,\infty}(q)\sup_{t>0}t\lr{\frac{1}{t}\int_t^\infty f(s)^q\dd s}^{\frac{1}{q}}
\end{equation*}
holds for all monotonically decreasing functions $f:(0,\infty)\to [0,\infty)$ is $\bar{C}_{1,\infty}(q)=\lr{\frac{1}{q}}^{-\frac{1}{q}}\lr{1-\frac{1}{q}}^{-\lr{1-\frac{1}{q}}}$.
\end{Satz}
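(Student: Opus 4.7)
The plan is to mirror the two-step structure of the discrete analogue (\cref{result:right-weak-q}), but exploit the fact that the continuous setting allows us to work with a genuine single-parameter extremal function rather than a discrete family of vertices. Unlike the discrete case, no tedious Taylor-expansion/monotonicity lemma is required here.

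\textbf{Step 1 (Lower bound; extremal example).} For $T > 0$, test the inequality against $f_T = \frac{1}{T}\chi_{(0,T)}$. Then $\sup_{t>0} tf_T(t) = 1$. Direct computation gives $\int_t^\infty f_T(s)^q\,\dd s = \frac{T-t}{T^q}$ for $t \in (0,T)$ and $0$ otherwise, so
\begin{equation*}
\sup_{t>0} t\lr{\frac{1}{t}\int_t^\infty f_T(s)^q\,\dd s}^{\frac{1}{q}} = \frac{1}{T}\sup_{t\in(0,T)} t^{1-\frac{1}{q}}(T-t)^{\frac{1}{q}}.
\end{equation*}
Differentiating $t \mapsto t^{1-1/q}(T-t)^{1/q}$ shows the maximum is attained at $t^\ast = T(1-1/q)$, with maximal value $T\lr{\frac{1}{q}}^{\frac{1}{q}}\lr{1-\frac{1}{q}}^{1-\frac{1}{q}}$. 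Hence
\begin{equation*}
\sup_{t>0} t\lr{\frac{1}{t}\int_t^\infty f_T(s)^q\,\dd s}^{\frac{1}{q}} = \lr{\frac{1}{q}}^{\frac{1}{q}}\lr{1-\frac{1}{q}}^{1-\frac{1}{q}},
\end{equation*}
independent of $T$. This forces $\bar{C}_{1,\infty}(q) \geq \lr{\frac{1}{q}}^{-\frac{1}{q}}\lr{1-\frac{1}{q}}^{-\lr{1-\frac{1}{q}}}$.

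\textbf{Step 2 (Upper bound; reduction via monotonicity).} Let $f:(0,\infty)\to[0,\infty)$ be monotonically decreasing and set $A \defeq \sup_{t>0} tf(t)$; we may assume $A < \infty$. Fix $\eps > 0$ and choose $t_0 > 0$ with $t_0 f(t_0) > A - \eps$. Monotonicity yields $f(s) \geq f(t_0) > (A-\eps)/t_0$ for all $s \in (0,t_0)$, so for any $t \in (0,t_0)$,
\begin{equation*}
\int_t^\infty f(s)^q\,\dd s \geq \int_t^{t_0} f(s)^q\,\dd s \geq \frac{(A-\eps)^q}{t_0^q}(t_0 - t).
\end{equation*}
Therefore
\begin{equation*}
\sup_{t>0} t\lr{\frac{1}{t}\int_t^\infty f(s)^q\,\dd s}^{\frac{1}{q}} \geq \frac{A-\eps}{t_0}\sup_{t\in(0,t_0)} t^{1-\frac{1}{q}}(t_0 - t)^{\frac{1}{q}} = (A-\eps)\lr{\frac{1}{q}}^{\frac{1}{q}}\lr{1-\frac{1}{q}}^{1-\frac{1}{q}},
\end{equation*}
using the same one-variable maximization as in Step 1. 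Letting $\eps \downarrow 0$ and rearranging gives the upper bound matching Step 1.

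\textbf{Obstacle.} There is essentially no obstacle beyond bookkeeping: the argument replaces the discrete simplex-vertex analysis, the distinction between $q \leq 2$ and $q \geq 2$, and the need to control $g_N(x_N)$ by the trivial continuous calculus fact that $t \mapsto t^{1-1/q}(T-t)^{1/q}$ attains its maximum on $(0,T)$ at $T(1-1/q)$. The only subtle point is the use of monotonicity to reduce from an arbitrary $f$ to the step function situation; without monotonicity, the choice of $t_0$ with $t_0 f(t_0)$ close to $A$ would not imply $f(s) \geq (A-\eps)/t_0$ on $(0,t_0)$.
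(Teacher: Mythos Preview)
Your proof is correct and follows essentially the same approach as the paper: both use the step function $f_T=\frac{1}{T}\chi_{(0,T)}$ as the extremal example, maximize $t\mapsto t^{1-1/q}(T-t)^{1/q}$ at $t=T(1-1/q)$, and then for general $f$ pick a point $t_0$ with $t_0 f(t_0)$ near the supremum and compare $f$ to a step function via monotonicity. The only difference is organizational---the paper frames Step~1 as computing the infimum over compactly supported functions, while you test directly against $f_T$---but the content is the same.
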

\begin{proof}
\emph{Step 1. We prove the claim for functions supported by an interval $(0,T)$ with $T>0$.}
The infimum of the expression
\begin{equation*}
\sup_{t>0}t\lr{\frac{1}{t}\int_t^\infty f(s)^q\dd s}^{\frac{1}{q}}
\end{equation*}
over the monotonically decreasing functions $f:(0,\infty)\to [0,\infty)$ with $f(T)=0$ and $\sup_{t>0}tf(t)=1$ is attained at the function $f=\frac{1}{T}\chi_{(0,T)}:(0,\infty)\to [0,\infty)$.
Its value is thus equal to
\begin{equation*}
\sup_{t\in (0,T)}t\lr{\frac{1}{t}\int_t^T\frac{1}{T^q}\dd s}^{\frac{1}{q}}=\sup_{t\in (0,T)}\frac{1}{T}t^{1-\frac{1}{q}}(T-t)^{\frac{1}{q}}.
\end{equation*}
One can readily check that the derivative of the function $F:(0,T)\to \RR$, $F(t)=\frac{1}{T} t^{1-\frac{1}{q}} (T-t)^{\frac{1}{q}}$ is given by $F^\prime(t)=\frac{q(T-t)+T}{q(T-t)T}\lr{\frac{T-t}{t}}^{\frac{1}{q}}$.
Therefore, $F$ is maximized at $t=T\lr{1-\frac{1}{q}}$, and the maximum is $\lr{\frac{1}{q}}^{\frac{1}{q}}\lr{1-\frac{1}{q}}^{1-\frac{1}{q}}$ independently of $T$.

\emph{Step 2. We prove the claim for all functions.}
It remains to show that
\begin{equation*}
\frac{\sup_{t>0}t^{1-\frac{1}{q}}\lr{\int_t^\infty f(s)^q\dd s}^{\frac{1}{q}}}{\sup_{t>0}tf(t)}\geq\lr{\frac{1}{q}}^{\frac{1}{q}}\lr{1-\frac{1}{q}}^{1-\frac{1}{q}}
\end{equation*}
for all monotonically decreasing functions $f:(0,\infty)\to [0,\infty)$.
Let $\eps>0$.
Then, for all monotonically decreasing functions $f:(0,\infty)\to [0,\infty)$ with $\sup\setcond{tf(t)}{t>0}=1$, there exists a number $T\in\NN$ such that $f(T)\geq\frac{1-\eps}{T}$.
It follows that
\begin{align*}
&\sup_{t>0}t^{1-\frac{1}{q}}\lr{\int_t^\infty f(s)^q\dd s}^{\frac{1}{q}}\geq \sup_{t\in (0,T)} t^{1-\frac{1}{q}}\lr{\int_t^T \frac{1}{T^q}\dd s}^{\frac{1}{q}}(1-\eps)\\
&=\sup_{t\in (0,T)}t^{1-\frac{1}{q}}\lr{\frac{T-t}{T^q}}^{\frac{1}{q}}(1-\eps)\geq\lr{\frac{1}{q}}^{\frac{1}{q}}\lr{1-\frac{1}{q}}^{1-\frac{1}{q}}(1-\eps).
\end{align*}
Taking the limit $\eps\downarrow 0$ yields the desired inequality.
\end{proof}

The precise values for $\bar{c}_{1,\infty}(q)$ and $\bar{C}_{1,\infty}(q)$ are illustrated in \cref{fig:weak-l1-continuous}.
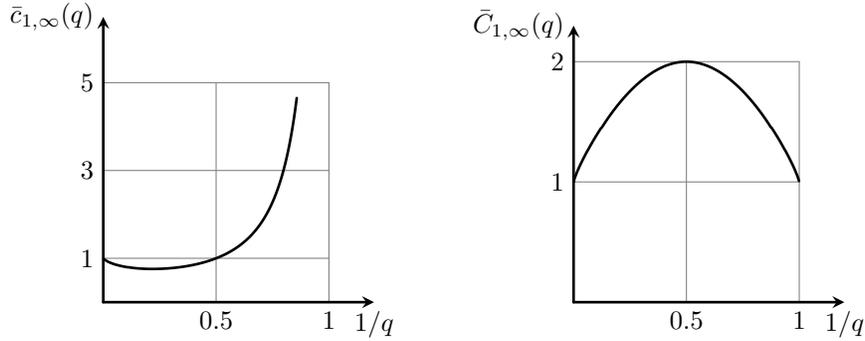
\begin{figure}[H]
\begin{center}
\begin{tikzpicture}[line cap=round,line join=round,>=stealth,x=3cm,y=0.584cm]
\draw[black!50] (0,1)--(1,1);
\draw[black!50] (0,3)--(1,3);
\draw[black!50] (0,5)--(1,5);
\draw[black!50] (0.5,0)--(0.5,5);
\draw[black!50] (1,0)--(1,5);
\draw[line width=1pt,->] (0,0)--(1.2,0);
\draw[line width=1pt,->] (0,0)--(0,6.5);
\draw (0,6.5) node[left]{$\bar{c}_{1,\infty}(q)$};
\draw (1.2,0) node[below]{$1/q$};
\draw (0,1) node[left]{$1$};
\draw (0,3) node[left]{$3$};
\draw (0,5) node[left]{$5$};
\draw (1,0) node[below]{$1$};
\draw (0.5,0) node[below]{$0.5$};

\draw[line width=1pt,smooth,samples=200,domain=0.001:0.86] plot(\x,{((\x)/(1-(\x)))^((\x))});
\end{tikzpicture}
\qquad
\begin{tikzpicture}[line cap=round,line join=round,>=stealth,x=3cm,y=1.6cm]
\draw[black!50] (1,0)--(1,2);
\draw[black!50] (0,1)--(1,1);
\draw[black!50] (0,2)--(1,2);
\draw[black!50] (0.5,0)--(0.5,2);
\draw[line width=1pt,->] (0,0)--(1.2,0);
\draw[line width=1pt,->] (0,0)--(0,2.3);
\draw (0,2.3) node[left]{$\bar{C}_{1,\infty}(q)$};
\draw (1.2,0) node[below]{$1/q$};
\draw (0,1) node[left]{$1$};
\draw (1,0) node[below]{$1$};
\draw (0.5,0) node[below]{$0.5$};
\draw (0,2) node[left]{$2$};

\draw[line width=1pt,smooth,samples=200,domain=2.899999996260959E-8:0.9999990250810828] plot(\x,{1/((\x)^((\x))*(1-(\x))^(1-(\x)))});
\end{tikzpicture}
\end{center}
\caption{The function $1/q\mapsto (q-1)^{-\frac{1}{q}}$ (left) and $1/q\mapsto \lr{\frac{1}{q}}^{-\frac{1}{q}}\lr{1-\frac{1}{q}}^{-\lr{1-\frac{1}{q}}}$ (right).\label{fig:weak-l1-continuous}}
\end{figure}

\section{Applications to sparse approximation}\label{chap:devore}
As mentioned in \cref{chap:intro}, the inequalities \eqref{eq:general-stechkin-q-intro} and \eqref{eq:general-weak-l1-q-intro} play an important role in nonlinear approximation.
More precisely, we will outline the connection of \cite[Theorem~4]{DeVore1998} and our results.
Let $\mathcal{H}$ be an infinite-dimensional separable real Hilbert space with inner product $\skpr{\cdot}{\cdot}_{\mathcal{H}}$ and norm $\mnorm{\cdot}_{\mathcal{H}}$.
The choice of an orthonormal basis $(e_k)_{k\in\NN}$ and Parseval's identity give an isometric isomorphism $\mathcal{H}\to\ell_2$, $f\mapsto (\skpr{f}{e_k}_{\mathcal{H}})_{k\in\NN}$.
Sparse approximation in $\mathcal{H}$ is implemented by defining the approximation error for $f\in\mathcal{H}$ as
\begin{equation*}
E_n(f)_{\mathcal{H}}=\setcond{\mnorm{f-g}_{\mathcal{H}}}{g\in\Sigma_{n-1}(\mathcal{H})}
\end{equation*}
where $\Sigma_{n-1}(\mathcal{H})\defeq\setcond{\sum_{k\in\Lambda} \lambda_k e_k}{\lambda_k\in\RR,\Lambda\subset\NN,\# \Lambda < n}$.
Then, for $\alpha>0$, the approximation space $\mathcal{A}_r^\alpha(\mathcal{H})$ is defined as the set of elements $f\in\mathcal{H}$ for which the quantity
\begin{equation*}
\mnorm{f}_{\mathcal{A}_r^\alpha(\mathcal{H})}\defeq\begin{cases}
\lr{\sum_{n=1}^\infty(n^\alpha E_n(f)_{\mathcal{H}})^r \frac{1}{n}}^{\frac{1}{r}},&0<r<\infty,\\
\sup_{n\in\NN}n^\alpha E_n(f)_{\mathcal{H}},&r=\infty
\end{cases}
\end{equation*}
is finite.
Approximation spaces are then subject to characterizations in terms of Lorentz sequence spaces $\ell_{p,r}$, i.e., the sets of bounded sequences for which the quantity
\begin{equation*}
\mnorm{(f_k)_{k\in\NN}}_{\ell_{p,r}}\defeq\begin{cases}
\lr{\sum_{n\in\NN}(n^{\frac{1}{p}}f_n^\ast)^r\frac{1}{n}}^{\frac{1}{r}},&0<r<\infty,\\
\sup_{n\in\NN} n^{\frac{1}{p}} f_n^\ast,&r=\infty
\end{cases}
\end{equation*}
is finite.
Here $0<p\leq\infty$, and $(f_k^\ast)_{k\in\NN}$ is the non-increasing rearrangement of the sequence $(\abs{f_k})_{k\in\NN}$.
Now, given $f\in\mathcal{H}$ and $f_k\defeq\skpr{f}{e_k}_{\mathcal{H}}$ for $k\in\NN$, DeVore \cite[Theorem~4]{DeVore1998} shows that
\begin{equation}
\mnorm{f}_{\mathcal{A}_r^\alpha(\mathcal{H})}\asymp\mnorm{(f_k)_{k\in\NN}}_{\ell_{\tau,r}},\label{eq:devore}
\end{equation}
meaning that there exist constants $c,C>0$ such that
\begin{equation*}
\frac{1}{c}\mnorm{f}_{\mathcal{A}_r^\alpha(\mathcal{H})}\leq\mnorm{(f_k)_{k\in\NN}}_{\ell_{\tau,r}}\leq C \mnorm{f}_{\mathcal{A}_r^\alpha(\mathcal{H})}.
\end{equation*}
Here $\tau=(\alpha+\frac{1}{2})^{-1}$.
Notable special cases of \eqref{eq:devore} are re-parameterized by the results from \cref{chap:stechkin,chap:weak}.
\begin{Satz}\label{result:devore}
With the definitions above, the following statements are true for the minimal constants $c,C>0$ in the inequality
\begin{equation}
\frac{1}{c}\mnorm{f}_{\mathcal{A}_r^\alpha(\mathcal{H})}\leq\mnorm{(f_k)_{k\in\NN}}_{\ell_{\tau,r}}\leq C\mnorm{f}_{\mathcal{A}_r^\alpha(\mathcal{H})},\label{eq:quasi-norm-equivalence}
\end{equation}
where $\tau=(\alpha+\frac{1}{2})^{-1}$.
\begin{enumerate}[label={(\roman*)},leftmargin=*,align=left,noitemsep]
\item{If $r=\tau$, then $c=c_1(2\alpha+1)^{\alpha+\frac{1}{2}}$ and $C=C_1(2\alpha+1)^{\alpha+\frac{1}{2}}$.\label{devore-strong-q}}
\item{If $r=\infty$, then $c=c_{1,\infty}(2\alpha+1)^{\alpha+\frac{1}{2}}$ and $C=C_{1,\infty}(2\alpha+1)^{\alpha+\frac{1}{2}}$.\label{devore-weak-q}}
\end{enumerate}
\end{Satz}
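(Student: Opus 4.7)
The plan is to transport the problem to $\ell_2$ via the orthonormal basis $(e_k)_{k\in\NN}$ and, after a power change of variable, to recognize the two sides of \eqref{eq:quasi-norm-equivalence} as precisely the two sides of the corresponding discrete Stechkin inequality. By Parseval's identity, the best $(n-1)$-sparse approximation of $f\in\mathcal{H}$ from $\Sigma_{n-1}(\mathcal{H})$ is obtained by keeping the $n-1$ largest Fourier coefficients, so
$$E_n(f)_{\mathcal{H}}=\lr{\sum_{k=n}^\infty(f_k^\ast)^2}^{\frac{1}{2}},$$
where $(f_k^\ast)_{k\in\NN}$ denotes the non-increasing rearrangement of $(\abs{f_k})_{k\in\NN}$. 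With $q\defeq 2\alpha+1$ and $\tau=\frac{2}{q}=(\alpha+\frac{1}{2})^{-1}$, the substitution $a_k\defeq(f_k^\ast)^\tau$ yields a monotonically decreasing non-negative sequence satisfying $(f_k^\ast)^2=a_k^q$, so $E_n(f)_{\mathcal{H}}^\tau=\lr{\sum_{k=n}^\infty a_k^q}^{\frac{1}{q}}$ and $n^{\alpha\tau}=n^{1-\frac{1}{q}}$.

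For case~\ref{devore-strong-q}, using $\tau\alpha-1=-\frac{1}{q}$, one computes
$$\mnorm{f}_{\mathcal{A}_\tau^\alpha(\mathcal{H})}^\tau=\sum_{n=1}^\infty\lr{\frac{1}{n}\sum_{k=n}^\infty a_k^q}^{\frac{1}{q}}\quad\text{and}\quad\mnorm{(f_k)_{k\in\NN}}_{\ell_{\tau,\tau}}^\tau=\sum_{n=1}^\infty a_n,$$
so the strong discrete Stechkin inequality \eqref{eq:general-stechkin-q} with exponent $q$ applied to $(a_n)_{n\in\NN}$ yields
$$\frac{1}{c_1(q)}\mnorm{f}_{\mathcal{A}_\tau^\alpha(\mathcal{H})}^\tau\leq\mnorm{(f_k)_{k\in\NN}}_{\ell_{\tau,\tau}}^\tau\leq C_1(q)\mnorm{f}_{\mathcal{A}_\tau^\alpha(\mathcal{H})}^\tau.$$
Raising to the power $\frac{1}{\tau}=\alpha+\frac{1}{2}$ gives the claimed constants $c_1(q)^{\alpha+\frac{1}{2}}$ and $C_1(q)^{\alpha+\frac{1}{2}}$.

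Case~\ref{devore-weak-q} is entirely analogous: with the same substitution, $(n^\alpha E_n(f)_{\mathcal{H}})^\tau=n\lr{\frac{1}{n}\sum_{k=n}^\infty a_k^q}^{\frac{1}{q}}$ and $(n^{\frac{1}{\tau}}f_n^\ast)^\tau=na_n$, so that taking suprema in $n$ and applying the weak discrete Stechkin inequality \eqref{eq:general-weak-l1-q} followed by exponentiation by $\alpha+\frac{1}{2}$ delivers $c_{1,\infty}(q)^{\alpha+\frac{1}{2}}$ and $C_{1,\infty}(q)^{\alpha+\frac{1}{2}}$. The constants are minimal because every admissible monotonically decreasing non-negative sequence $(a_n)_{n\in\NN}$ is realized as $((f_k^\ast)^\tau)_{k\in\NN}$ by $f\defeq\sum_{k=1}^\infty a_k^{\frac{1}{\tau}}e_k$, so the near-extremal sequences from the proofs of \cref{result:left-stechkin,result:right-weak-q} (and the analogues for $C_1(q)$ and $c_{1,\infty}(q)$) transfer directly to near-extremal choices of $f$; observe that these sequences, including $a_n=\frac{1}{n}$ for \cref{result:left-weak-q}, indeed correspond to elements of $\ell_2$ since $\frac{2}{\tau}=q>1$. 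I expect no genuine obstacle beyond the algebraic matching of exponents recorded above.
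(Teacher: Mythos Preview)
Your proposal is correct and follows essentially the same route as the paper: both arguments identify $E_n(f)_{\mathcal{H}}=\lr{\sum_{k\geq n}(f_k^\ast)^2}^{1/2}$, perform the substitution $a_k=(f_k^\ast)^\tau$ with $q=2/\tau=2\alpha+1$, match the resulting expressions to the two sides of the discrete Stechkin inequalities, and finish by taking $(1/\tau)$th powers. Your added remark that every admissible decreasing sequence $(a_n)$ arises from some $f\in\mathcal{H}$ (so that minimality of the Stechkin constants transfers) makes explicit a point the paper leaves implicit.
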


\begin{proof}
For $r=\tau$, inequality \eqref{eq:devore} becomes
\begin{align*}
\frac{1}{C_1}\lr{\sum_{n=1}^\infty\lr{n^\alpha\lr{\sum_{k=n}^\infty (f_k^\ast)^2}^{\frac{1}{2}}}^\tau \frac{1}{n}}^{\frac{1}{\tau}}&\leq\lr{\sum_{k=1}^\infty (f_k^\ast)^\tau}^{\frac{1}{\tau}}\\
&\leq C_2\lr{\sum_{n=1}^\infty\lr{n^\alpha\lr{\sum_{k=n}^\infty (f_k^\ast)^2}^{\frac{1}{2}}}^\tau \frac{1}{n}}^{\frac{1}{\tau}}.
\end{align*}
Setting $a_k=(f_k^\ast)^\tau$ and $q=\frac{2}{\tau}=2\alpha+1$ gives
\begin{equation*}
\frac{1}{C_1}\lr{\sum_{n=1}^\infty n^{-\frac{1}{q}}\lr{\sum_{k=n}^\infty a_k^q}^{\frac{1}{q}}}^{\frac{1}{\tau}}\leq\lr{\sum_{k=1}^\infty a_k}^{\frac{1}{\tau}}\leq C_2\lr{\sum_{n=1}^\infty n^{-\frac{1}{q}}\lr{\sum_{k=n}^\infty a_k^q}^{\frac{1}{q}}}^{\frac{1}{\tau}}.
\end{equation*}
Raising everything to the $\tau$th power shows \ref{devore-strong-q}.
Similarly, for $r=\infty$, inequality \eqref{eq:devore} becomes
\begin{equation*}
\frac{1}{C_1}\sup_{n\in\NN}n^\alpha\lr{\sum_{k=n}^\infty (f_k^\ast)^2}^{\frac{1}{2}}\leq\sup_{k\in\NN} k^{\frac{1}{\tau}} f_k^\ast\leq C_2\sup_{n\in\NN}n^\alpha\lr{\sum_{k=n}^\infty (f_k^\ast)^2}^{\frac{1}{2}}
\end{equation*}
Raising everything to the $\tau$th power gives
\begin{equation*}
\frac{1}{C_1}\sup_{n\in\NN}n^{\alpha\tau}\lr{\sum_{k=n}^\infty (f_k^\ast)^2}^{\frac{\tau}{2}}\leq\sup_{k\in\NN} k (f_k^\ast)^\tau\leq C_2\sup_{n\in\NN}n^{\alpha\tau}\lr{\sum_{k=n}^\infty (f_k^\ast)^2}^{\frac{\tau}{2}}.
\end{equation*}
Setting $a_k=(f_k^\ast)^\tau$ and $q=\frac{2}{\tau}=2\alpha+1$ shows \ref{devore-weak-q}.
\end{proof}

Stechkin \cite{Stechkin1951} considers the Hilbert space $\mathcal{H}=L_2(\mathbb{T}^d)$ of square-integrable functions on $\mathbb{T}^d = [0,2\pi]^d$.
An orthonormal basis in $L_2(\mathbb{T}^d)$ is given by $e_k(x)\defeq\frac{1}{\sqrt{2\piup}}\exp(\ii kx)$ for $k\in\ZZ$ and $x\in\mathbb{T}^d$. 
Then \eqref{eq:devore} for $r=\tau=1$ shows that the approximation space $\mathcal{A}_1^{1/2}(L_2(\mathbb{T}^d))$ coincides with the Wiener algebra $\mathcal{A}(\mathbb{T}^d)\defeq\setcond{f\in C(\mathbb{T})}{\sum_{k\in\ZZ^d}\abss{\hat{f}(k)}<\infty}$ and, moreover, their quasi-norms are equivalent in the sense of \eqref{eq:quasi-norm-equivalence}.
\cref{result:devore} yields 
\begin{equation*}
\frac{2}{\piup}\mnorm{f}_{\mathcal{A}_1^{1/2}(L_2(\mathbb{T}^d))}\leq\mnorm{f}_{\mathcal{A}(\mathbb{T}^d)}\leq 1.1064957714\mnorm{f}_{\mathcal{A}_1^{1/2}(L_2(\mathbb{T}^d))}
\end{equation*}
for $f\in L_2(\mathbb{T}^d)$, the constants being independent of $d$.

Following DeVore \cite[Remark~7.4]{DeVore1998}, if the Hilbert space $\mathcal{H}$ is chosen to be $L_2(\RR)$ with a wavelet orthonormal basis $(\psi_I)_{I\in\mathcal{D}}$, then the approximation space can be characterized in terms of Besov smoothness.
A multivariate version of this result using a tensorized wavelet basis is derived by Sickel and Ullrich in \cite[Theorem~2.7]{SickelUl2009}.
For further results in this direction, see the papers of Hansen and Sickel \cite{HansenSi2010,HansenSi2012}.

\textbf{Acknowledgements.} The authors would like to acknowledge support by the DFG Ul-403/2-1.
They would further like to thank Kai Lüttgen, Winfried Sickel, Vladimir Temlyakov, and Gerd Wachsmuth for insightful discussions.

\providecommand{\bysame}{\leavevmode\hbox to3em{\hrulefill}\thinspace}
\providecommand{\href}[2]{#2}

\end{document}